 \def\newblock{\ }%
\providecommand{\abs}[1]{\left\lvert #1 \right\rvert}
\DeclareMathOperator{\closure}{cl}
\def\Re{\mathbb{R}}
\def\Ex{\mathbb{E}}
\def\allP{{\Pi}}
\def\A{\mathcal{A}}
\def\R{\mathcal{R}}
\def\U{\mathcal{U}}
\def\S{\mathcal{S}}
\def\T{\mathcal{T}}
\def\X{\mathcal{X}}
\def\Dirac{\mathcal{D}}
\def\dhat{\hat d}
  \title{Necessary and sufficient conditions for Pareto efficiency in robust multiobjective optimization}
\author{Rasmus Bokrantz\thanks{E-mail: {\tt bokrantz@kth.se} and {\tt rasmus.bokrantz@raysearchlabs.com}}~$^\dagger$ and Albin Fredriksson\thanks{Optimization and Systems Theory, Department of Mathematics, KTH Royal Institute of Technology, SE-100 44 Stockholm, Sweden; and RaySearch Laboratories, Sveav\"{a}gen 25, SE-111 34 Stockholm, Sweden.}}
\date  {May, 2017}
\begin{document}

\maketitle

\vspace{-0.3cm}
\noindent
\textbf{This is a post-print of an article published in the European Journal of Operational Research. Please cite as:\\\\Bokrantz, R., A.~Fredriksson. 2017. Necessary and sufficient conditions for Pareto efficiency in robust multiobjective optimization. \emph{Eur.~J.~Oper.~Res.}, 262(2) 682--692. }

\vspace{0.3cm}

\begin{abstract}
\noindent
We provide necessary and sufficient conditions for robust efficiency (in the sense of Ehrgott et al. (2014)) to multiobjective optimization problems that depend on uncertain parameters. These conditions state that a solution is robust efficient (under minimization) if it is optimal to a strongly increasing scalarizing function, and only if it is optimal to a strictly increasing scalarizing function. By counterexample, we show that the necessary condition cannot be strengthened to convex scalarizing functions, even for convex problems. We therefore define and characterize a subset of the robust efficient solutions for which an analogous necessary condition holds with respect to convex scalarizing functions. This result parallels the deterministic case where optimality to a convex and strictly increasing scalarizing function constitutes a necessary condition for efficiency. By a numerical example from the field of radiation therapy treatment plan optimization, we illustrate that the curvature of the scalarizing function influences the conservatism of an optimized solution in the uncertain case.
\end{abstract}

%\begin{keyword}
%% keywords here, in the form: keyword \sep keyword
%multiobjective optimization \sep robust optimization \sep scalarization \sep uncertainty \sep convexity
%% PACS codes here, in the form: \PACS code \sep code

%% MSC codes here, in the form: \MSC code \sep code
%% or \MSC[2008] code \sep code (2000 is the default)

%\end{keyword}

%\end{frontmatter}

\noindent

\section{Introduction}
Optimization problems that arise in applications often rely on parameter values that are unknown at the time when the problems are solved. A risk therefore exists that if the actual parameter values deviate from the estimated ones, an optimized solution can be suboptimal or even infeasible. It is also common that the performance of the solutions is judged by multiple objectives that are in conflict, such as quality versus cost. This situation calls for a prioritization of the objectives, which can be difficult to articulate into a precise mathematical function prior to the procurement of knowledge about the possible solutions to the problem.

In this paper, we address the twin difficulties of uncertainty in problem data and uncertainty in one's preferences by combining robust and multiobjective optimization. Robust optimization finds solutions that perform well also under perturbed parameter values. Different robustness concepts deem different solutions to be robust optimal. In this paper, we consider \emph{minimax robustness}, in which the optimization is conditioned on the worst case realization of the uncertainty within some uncertainty set, see, e.g.,~\citet{ben-tal09}. The goal in multiobjective optimization is generally to find the \emph{Pareto efficient} (or simply \emph{efficient}) solutions, meaning the feasible solutions such that no objective can be improved without a sacrifice in one of the others, see, e.g.,~\citet{miettinen99} and~\citet{ehrgott05}. A representation of the efficient solutions facilitates decision making in the sense that it enables the decision maker to explore the possible tradeoffs between the objectives before deciding how they should be prioritized.

Robust optimization and multiobjective optimization are both well-studied topics, but rarely have they been considered in combination. An obstacle that may have prevented their simultaneous application is that the notion of a worst case is not well-defined unless the priorities of the different objectives have been decided (which removes the multiobjective aspect of the problem). This difficulty renders the standard definition of efficiency unable to characterize the robust (efficient) solutions to uncertain multiobjective problems. Several previous investigators circumvent this difficulty by optimizing objectives that are averaged over a neighborhood of the current solution, or by optimizing the nominal objective values subject to constraints that limit the objectives' variability over the uncertainty set, see, e.g.,~\citet{gunawan05} and~\citet{deb06}. Another possibility that has been explored by~\citet{kuroiwa12},~\citet{chen12}, and~\citet{fliege14} is to let each objective evaluate with respect to its own worst case realization of the uncertainty. This objectivewise formulation fits within the standard multiobjective optimization framework, but is conservative in the sense that it protects against simultaneous occurrence of several realizations of the uncertainty. We demonstrate by a numerical example that such conservatism is unwarranted if the uncertainty in reality affects the objectives in a correlated manner, because it then sacrifices quality under errors that can occur in practice in favor of robustness against hypothetical errors where the uncertainty materializes independently for the different objectives.

In the present paper, we are concerned with uncertain multiobjective problems where the uncertainty jointly affects all objectives. Our paper extends the research of~\citet{ehrgott14}, who proposed a generalization of efficiency, called \emph{robust Pareto efficiency} (or simply \emph{robust efficiency}), which is applicable to uncertain multiobjective optimization problems on the form that we consider. Since the publication of a preprint of the present paper~\citep{bokrantz13b}, a number of related publications has appeared: different concepts for robust efficiency are reviewed in~\citet{ide16}; optimality and duality for uncertain multiobjective problems are considered in~\citet{chuong16}; robust counterparts are derived in~\citet{wang15}; numerically tractable optimality conditions for uncertain multiobjective linear problems are given in~\citet{goberna15}; and the relation to the field of set-valued optimization are explored in~\citet{ide14} and~\citet{ide14b}.

In their paper, \citet{ehrgott14} discuss how to find robust efficient solutions numerically, and give proof that two standard methods for calculation of efficient solutions to deterministic problems---the weighted sum method and the $\epsilon$-constraint method---can be used also to compute robust efficient solutions~\citep[Theorems 4.3 and 4.7]{ehrgott14}. None of the two methods is, however, capable of finding the full robust efficient set~\citep[Examples 4.5 and 4.9]{ehrgott14}. This result is in sharp contrast to the deterministic case where the weighted sum method finds all efficient solutions if the problem is convex and the $\epsilon$-constraint method finds all efficient solutions in general~\citep[Theorems 3.1.4 and 3.2.2]{miettinen99}. The purpose of the present paper is to gain better insight to how robust efficient solutions can be computed, both in general and as solutions to tractable (i.e., convex) optimization problems. Specifically, our main contributions are the following:
\begin{itemize}
\item We give necessary and sufficient conditions for robust efficiency to uncertain multiobjective programs. These conditions show that all robust efficient solutions can be found by minimization of strictly increasing scalarizing functions, and that all optimal solutions to strongly increasing scalarizing functions are robust efficient.

\item We introduce a more restrictive concept of efficiency than robust efficiency, called \emph{convex hull robust Pareto efficiency} (or simply \emph{convex hull efficiency}), and give necessary and sufficient conditions for convex hull efficiency to uncertain multiobjective programs. These conditions are analogous to those for robust efficiency, except that they hold with respect to convex scalarizing functions. The necessary condition thereby parallels a classical result for the deterministic case, which states that each efficient solution is optimal to some strictly increasing and convex scalarizing function. Further, if the optimization problem is convex for any fixed value of the uncertain parameter, then our results assert that each convex hull efficient solution is optimal to a convex scalarized problem, something that does not hold for robust efficient solutions in general.

\end{itemize}

We also illustrate, by application to optimization of proton therapy for cancer treatment, that the properties of the scalarizing function influence the conservatism of the attainable set of efficient solutions. This fact holds true also for scalarizing functions that have identical attainable sets of efficient solutions in the deterministic case.

\section{Notation}\label{sec-notation}
Vector inequalities are understood componentwise: it holds that $a \leq b$ if $a$ is less than or equal to $b$ in every component, and $a < b$ if $a$ is strictly less than $b$ in every component. The difference $A-B$ of two sets $A$ and $B$ denotes \hbox{$\{a-b:a \in A, b \in B\}$} while the difference \hbox{$a - B$} between $B$ and an element $a$ of $A$ denotes \hbox{$\{a\} - B = \{a - b: b \in B\}$}. The convex hull of a possibly infinite set $A$ is denoted $\conv(A)$ and defined as
\[
\conv(A) = \left\{ \sum_{i \in I} \lambda_i a_i : a_i \in A,~\lambda_i > 0,~\sum_{i \in I} \lambda_i = 1,~\abs{I} < \infty \right\}.
\]
The interior of a set $A$ is denoted $\interior(A)$ and the closure of a set $A$ is denoted $\closure(A)$. A semicolon is used to separate variables from parameters in the arguments to a function: $f(x;a)$ has the variable $x$ and the parameter $a$. A function is said to be convex if it is convex in its variables, but not necessarily in its parameters. The image $f(A)$ of a set $A$ in the domain of a function $f$ denotes \hbox{$\{f(a): a \in A\}$}. Similarly, $f(x;A)$ for a set of parameters $A$ of a function $f$ denotes \hbox{$\{f(x;a): a \in A \}$}. The following definitions are used to characterize monotonicity:
\begin{definition}[Increasing]
A function $u:\Re^n \to \Re$ is \emph{increasing} if for \hbox{$y,y' \in \Re^n$}
\[
y \leq y' \textrm{ implies } u(y) \leq u(y').
\]
\end{definition}
\begin{definition}[Strictly increasing]
A function $u:\Re^n \to \Re$ is \emph{strictly increasing} if for \hbox{$y,y' \in \Re^n$}
\[
y < y' \textrm{ implies } u(y) < u(y').
\]
\end{definition}
\begin{definition}[Strongly increasing]
A function $u:\Re^n \to \Re$ is \emph{strongly increasing} if for \hbox{$y,y' \in \Re^n$}
\[
y \leq y' \textrm{ and } y \neq y' \textrm{ implies } u(y) < u(y').
\]
\end{definition}

\section{Deterministic multiobjective optimization}
The main goal of the present paper is to understand which robust efficient solutions can be found as solutions to convex optimization problems. To be able to put our results in context, we first introduce the concept of efficiency for the deterministic case and review a collection of well-known results on the calculation of efficient solutions. 

\subsection{Pareto efficiency}
Deterministic multiobjective problems are formulated in this paper as the minimization of a vector-valued function \hbox{$f: \Re^m \to \Re^n$} over a feasible set \hbox{$\X \subseteq \Re^m$} according to
\begin{equation}\label{mco}
\begin{array}{ll}
\minimize{x \in \X} \quad & \displaystyle \left\{ f(x) := \Big( f_1(x) \cdots  f_n(x) \Big)^T \right\} .
\end{array}
\end{equation}
This formulation is a convex problem if \hbox{$f_1,\ldots,f_n$} are convex functions and $\X$ is a convex set. We understand optimality to formulation~\eqref{mco} in a Pareto sense, meaning that a solution is efficient if it satisfies the following criterion:
\begin{definition}[Pareto efficiency]\label{efficiency}
A feasible solution $x^*$ to problem~\eqref{mco} is \emph{Pareto efficient} (or simply \emph{efficient}) if there is no feasible $x$ such that \[f(x) \in f(x^*) - (\Re^n_+ \setminus \{0\}).\]
\end{definition}
Throughout this paper, we assume that $\X$ is nonempty and compact and that $f$ is lower semicontinuous. These conditions ensure that efficient solutions exist~\citep[Theorem 2.19]{ehrgott05}. 

\subsection{Necessary and sufficient conditions for efficiency}
Efficient solutions to formulation~\eqref{mco} can be found by minimization of a scalarizing function \hbox{$u: \Re^n \to \Re$} over $f(\X)$. The scalarized problem takes the form
\begin{equation}\label{mco-scalarized}
\begin{array}{ll}
\minimize{x \in \X} \quad & \displaystyle u(f(x)).
\end{array}
\end{equation}
The following conditions relate the optimal solutions to the scalarized problem~\eqref{mco-scalarized} to the efficient set of formulation~\eqref{mco}~\citep[Theorems 3.4.5 and 3.5.4]{miettinen99}:
\begin{theorem}[Necessary condition for efficiency]\label{thm-nec-mco}
A solution is efficient to problem~\eqref{mco} only if it is optimal to some scalarized problem according to formulation~\eqref{mco-scalarized} with strictly increasing and convex scalarizing function.
\end{theorem}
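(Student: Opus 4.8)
The plan is to prove the implication directly and constructively: given a Pareto efficient $x^*$, I will exhibit a single strictly increasing convex scalarizing function $u$ for which $x^*$ solves the scalarized problem~\eqref{mco-scalarized}. The appropriate tool is a weighted Tchebycheff (weighted $\ell_\infty$) function anchored at a reference point lying strictly below $f(x^*)$ in every coordinate; such a function is convex even though the problem~\eqref{mco} itself need not be, which is exactly why a nonlinear convex scalarization is needed here, a weighted-sum scalarization being inadequate in the nonconvex case. Concretely, set $y^* := f(x^*)$, fix any $z \in \Re^n$ with $z_i < y^*_i$ for all $i$, let $w_i := 1/(y^*_i - z_i) > 0$, and define
\[
u(y) := \max_{i=1,\dots,n} w_i\,(y_i - z_i).
\]

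First I would check the two structural properties demanded by the statement. The function $u$ is convex, being a pointwise maximum of finitely many affine functions. It is strictly increasing in the sense of the relevant definition: if $y < y'$, then $w_i(y_i - z_i) < w_i(y'_i - z_i)$ for every $i$ since each $w_i > 0$, so passing to maxima gives $u(y) < u(y')$.

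The substantive step is the optimality of $x^*$ for~\eqref{mco-scalarized} with this $u$, i.e.\ $u(f(x^*)) \le u(f(x))$ for all $x \in \X$. By the choice of weights, $w_i(y^*_i - z_i) = 1$ for each $i$, so $u(y^*) = 1$. Suppose, for contradiction, that $u(f(x)) < 1$ for some $x \in \X$. Then $w_i(f_i(x) - z_i) < 1$, equivalently $f_i(x) < y^*_i$, for every $i$; hence $f(x) \le f(x^*)$ with $f(x) \ne f(x^*)$, so that $f(x) \in f(x^*) - (\Re^n_+ \setminus \{0\})$, contradicting the Pareto efficiency of $x^*$ in the sense of Definition~\ref{efficiency}. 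Therefore $u(f(x)) \ge 1 = u(f(x^*))$ for all $x \in \X$, which is the claim.

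I do not expect a genuine obstacle here: the only care required is in calibrating the reference point $z$ and the weights $w$ so that the scalarized value at $y^*$ is precisely the threshold that efficiency forbids undercutting. It is worth noting that compactness of $\X$ and lower semicontinuity of $f$ play no role in this argument beyond guaranteeing, as already assumed, that an efficient solution exists at all, since the optimality of $x^*$ is read off directly from Definition~\ref{efficiency}. Note finally that the $u$ constructed above is strictly but not strongly increasing, in line with the wording of the theorem.
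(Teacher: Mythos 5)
Your proof is correct. The paper does not actually prove Theorem~\ref{thm-nec-mco} itself---it is quoted from Miettinen---but your argument is the standard weighted Tchebycheff one, and it is essentially the same construction the paper uses for the robust analogue (Theorem~\ref{thm-nec-robust-mco}), whose scalarizing function $u(y)=\min_{z\in f(x^*;\S)-\Re^n_+}\max_{i}(y_i-z_i)$ collapses, when $\S$ is a singleton, to the unweighted Tchebycheff function $\max_i\,(y_i-f_i(x^*))$ anchored at $f(x^*)$ rather than at a point strictly below it. All three steps check out: convexity as a finite maximum of affine functions, strict increasingness from $w_i>0$, and optimality by contradiction with Definition~\ref{efficiency}; the only remark worth making is that your contradiction step in fact uses only weak efficiency of $x^*$ (the existence of an $x$ with $f_i(x)<f_i(x^*)$ for every $i$), which is implied by efficiency, so nothing is lost.
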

\begin{theorem}[Sufficient condition for efficiency]\label{thm-suff-mco}
A solution is efficient to problem~\eqref{mco} if it is optimal to some scalarized problem according to formulation~\eqref{mco-scalarized} with strongly increasing scalarizing function.
\end{theorem}

The necessary conditions can be strengthened to scalarization with strictly increasing and linear functions if the optimization problem is convex~\citep[Theorem 3.1.4]{miettinen99}:
\begin{theorem}[Necessary condition for efficiency for convex problems]\label{thm-nec-convex-mco}
A solution is efficient to a convex multiobjective problem according to formulation~\eqref{mco} only if it is optimal to some scalarized problem according to formulation~\eqref{mco-scalarized} with strictly increasing and linear scalarizing function.
\end{theorem}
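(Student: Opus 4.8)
The plan is to reduce the statement to a single application of the supporting hyperplane theorem, with the convexity hypothesis entering only through the image of $f$. First I would introduce the extended image set
\[
C := f(\X) + \Re^n_+ = \bigl\{ y \in \Re^n : f(x) \le y \text{ for some } x \in \X \bigr\},
\]
and record that $C$ is convex whenever $f_1,\dots,f_n$ are convex and $\X$ is convex: given $y^k = f(x^k)+r^k$ with $x^k\in\X$, $r^k\ge 0$, $k=1,2$, and $\lambda\in[0,1]$, convexity of each $f_i$ gives $\lambda f(x^1)+(1-\lambda)f(x^2)\ge f\bigl(\lambda x^1+(1-\lambda)x^2\bigr)$ componentwise with $\lambda x^1+(1-\lambda)x^2\in\X$, so $\lambda y^1+(1-\lambda)y^2\in C$. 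This is the step where convexity is genuinely used, and it is ultimately why Theorem~\ref{thm-nec-mco} cannot be strengthened to linear scalarizations without a convexity assumption.

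Second, I would argue that $f(x^*)$ lies on the boundary $\partial C$ of $C$. It clearly belongs to $C$. If it were an interior point, then $f(x^*)-\varepsilon\mathbf 1\in C$ for all small $\varepsilon>0$ (with $\mathbf 1$ the all-ones vector), so there would be $x\in\X$ with $f(x)\le f(x^*)-\varepsilon\mathbf 1$, whence $f(x)\in f(x^*)-(\Re^n_+\setminus\{0\})$, contradicting efficiency of $x^*$ (Definition~\ref{efficiency}). Since $f(x^*)\in\partial C$, the supporting hyperplane theorem yields a nonzero $w\in\Re^n$ which, after possibly replacing $w$ by $-w$, satisfies $w^Ty\ge w^Tf(x^*)$ for every $y\in C$; in particular $w^Tf(x)\ge w^Tf(x^*)$ for every $x\in\X$, so $x^*$ is optimal to the scalarized problem~\eqref{mco-scalarized} with the linear scalarizing function $u(y)=w^Ty$. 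Testing the support inequality at $y=f(x^*)+te_i$, $t\ge 0$, for each standard basis vector $e_i$ gives $tw_i\ge 0$ for all $t\ge 0$, hence $w\ge 0$; and since $w\ne 0$, the map $y\mapsto w^Ty$ is strictly increasing, because $y<y'$ implies $w^T(y'-y)=\sum_i w_i(y_i'-y_i)>0$ (every term is nonnegative and the term for an index with $w_i>0$ is strictly positive). This establishes the theorem.

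I would expect the only substantive points to be the convexity of $C$ and the reduction of efficiency to the assertion ``$f(x^*)\in\partial C$''; everything else is bookkeeping with the separation theorem. It is also worth noting what the argument does \emph{not} deliver: the supporting vector $w$ need not be strictly positive, so the scalarization obtained is strictly increasing but not necessarily strongly increasing, which is consistent with the sufficient condition of Theorem~\ref{thm-suff-mco} being phrased in terms of strongly increasing functions.
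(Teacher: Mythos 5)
Your proof is correct; the paper itself does not prove Theorem~\ref{thm-nec-convex-mco} but only cites \cite[Theorem 3.1.4]{miettinen99}, and your argument --- convexity of $f(\X)+\Re^n_+$, the observation that efficiency of $x^*$ forces $f(x^*)$ onto the boundary of that set, and a supporting hyperplane with nonnegative nonzero normal --- is precisely the standard proof given in that reference. The closing remark that the weights need not be strictly positive (so the scalarization is strictly but not strongly increasing) is accurate and correctly delimits what the theorem delivers.
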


Theorems~\ref{thm-nec-mco}--\ref{thm-nec-convex-mco} provide the foundation for algorithms that calculate representations of the efficient set by solving multiple scalarized problems, see, e.g.,~\citet{ruzika05} for a review. If the original multiobjective problem is convex, then solving the scalarized problems requires only convex optimization. 

\section{Robust multiobjective optimization}\label{sec-robust}
In this section, we introduce the concept of robust efficiency, and discuss how robust efficient solutions can be found by scalarization. We then prove necessary and sufficient conditions for robust efficiency that form counterparts of Theorems~\ref{thm-nec-mco} and~\ref{thm-suff-mco} for robust multiobjective optimization.

\subsection{Robust efficiency}
Suppose that formulation~\eqref{mco} is subject to uncertainty. Then, the formulation can be cast as a problem where the objective functions \hbox{$f_1,\ldots,f_n$} depend on some random variable $S$ with range $\S$. We consider a solution to be robust only if it is feasible for every scenario. Thus, the feasible set $\X$ can be assumed not to be parameter-dependent: a constraint \hbox{$x \in \X(s)$} that is required to hold for all $s$ in $\S$ can be posed as the equivalent deterministic constraint \hbox{$x \in \cap_{s \in \S} \X(s)$}. Further, we assume that $\S$ is nonempty and compact, that the lower semicontinuity of \hbox{$f_1(\cdot;s),\ldots,f_n(\cdot;s)$} holds for any $s$ in $\S$, and that \hbox{$f_1(x;\cdot),\ldots,f_n(x;\cdot)$} are continuous for any $x \in \X$. These conditions ensure that the optimization problems that we subsequently consider have well-defined solution sets. To implement robustness against the scenarios in $\S$, the optimization should hedge against the worst case objective value over this set. The robust optimization counterpart of~\eqref{mco} is therefore
\begin{equation}\label{robust-mco}
\begin{array}{ll}
\minimize{x \in \X} \quad & \displaystyle \max_{s \in \S}~\left\{ f(x;s) := \Big( f_1(x;s) \cdots  f_n(x;s) \Big)^T \right\} .
\end{array}
\end{equation}
This formulation is the standard minimax formulation of robust optimization, but with a vector-valued objective function $f$. The difficulty with the formulation is that because $f$ is vector-valued, it constitutes a bi-level multiobjective problem: the inner-level maximization is a multiobjective problem for each fixed value of $x$. The conventional concept of efficiency according to Definition~\ref{efficiency} is therefore not valid for characterizing the efficient set to formulation~\eqref{robust-mco}. A more general concept of robust Pareto efficiency is instead needed. The following definition is due to~\citet{ehrgott14}:

\begin{definition}[Robust Pareto efficiency]\label{robust-efficiency}
A feasible solution $x^*$ to problem~\eqref{robust-mco} is \emph{robust Pareto efficient} (or simply \emph{robust efficient}) if there is no feasible $x$ such that \[f(x;\S) \subseteq f(x^*;\S) - (\Re^n_+ \setminus \{0\}).\]
\end{definition}
Note that this definition is on the same form as the standard definition of efficiency (Definition~\ref{efficiency}), but with set-membership generalized to subset inclusion. The concept of robust efficiency is illustrated in Figure~\ref{fig-re}. 

\begin{figure}[hbtp]
\centering
\includegraphics[width=6.5cm]{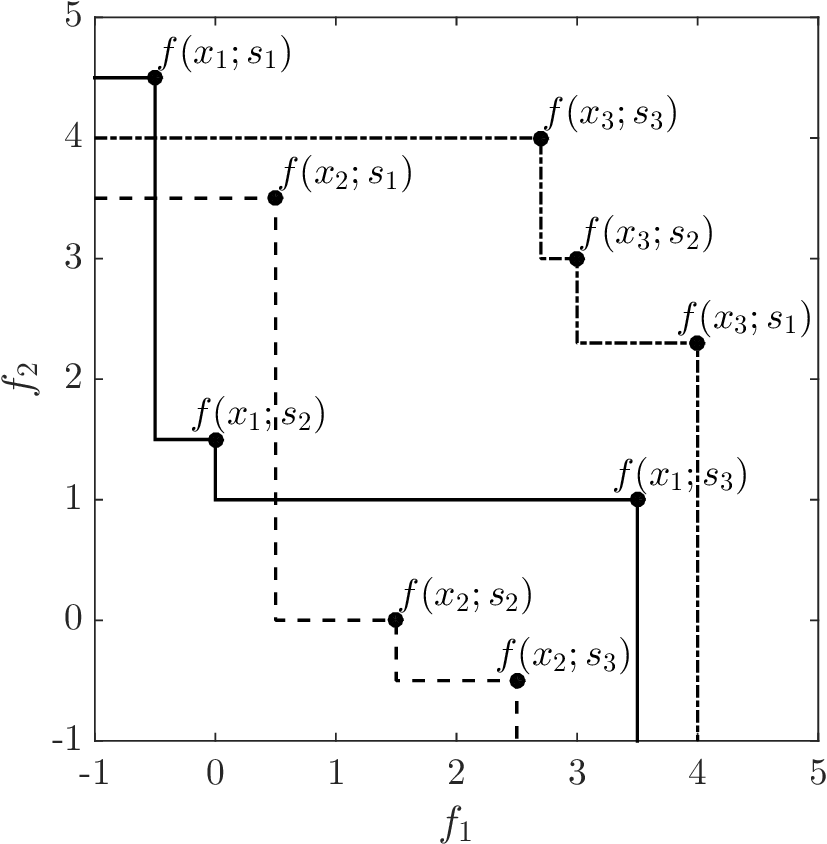}
\caption{Robust efficiency for an uncertain multiobjective problem with two objectives $f_1$ and $f_2$, a feasible set \hbox{$\X = \{ x_1, x_2, x_3 \}$}, and an uncertainty set \hbox{$\S=\{s_1,s_2,s_3\}$}. The solutions $x_1$ (solid) and $x_2$ (dashed) are robust efficient, while $x_3$ (dot-dashed) is dominated by $x_2$ according to Definition~\ref{robust-efficiency}.}
\label{fig-re}
\end{figure} 

\subsection{Necessary and sufficient conditions for robust efficiency}
In this section, we prove counterparts of Theorems~\ref{thm-nec-mco} and~\ref{thm-suff-mco} for robust multiobjective optimization concerning scalarization of~\eqref{robust-mco} with some upper semicontinuous scalarizing function \hbox{$u: \Re^n \to \Re$}. The scalarized problem for the uncertain case takes the form
\begin{equation}\label{robust-mco-scalarized}
\begin{array}{ll}
\minimize{x \in \X} \quad &  \displaystyle \max_{s \in \S} u(f(x;s)),
\end{array}
\end{equation}
where the maximum is attained due to the upper semicontinuity of $u$ and compactness of $f(x;\S)$. It is worth noticing some situations when this formulation constitutes a tractable optimization problem: If $f$ and $\X$ are convex, $\S$ is finite, and $u$ is increasing and convex, then the problem can be tractably solved on its epigraph form:
\begin{equation*}
\begin{array}{lll}
\minimize{x, \lambda} \quad & \lambda \\
\subject & \lambda \geq u(f(x;s)) & \quad s \in \S,\\
& x \in \X.
\end{array}
\end{equation*}
If $\S$ is infinite, there are special cases when~\eqref{robust-mco-scalarized} can be reformulated using robust optimization techniques. For example, if $u$ is a nonnegative vector, \hbox{$\S = \{s \in \Re^{n_s}: A s \leq b, s \geq 0\}$} is polyhedral, and \hbox{$f(x;s) = F(x) s$} is linear in $s$, where \hbox{$F : \Re^m \to \Re^n \times \Re^{n_s}$} is convex and matrix-valued, then strong duality for linear programming can be used to reformulate the problem
\[
\begin{array}{lll}
\minimize{x \in \X} \quad & \displaystyle \max_{s \in \S}  u^T F(x)s
\end{array}
\]
into its tractable equivalent
\[
\begin{array}{lll}
\minimize{x, y} \quad & b^T y \\
\subject & A^T y \geq F(x)^T u, \\
& x \in \X.
\end{array}
\]
For more details on tractable robust counterparts of uncertain optimization problems, see, e.g.,~\citet{ben-tal98},~\citet{bertsimas04}, and~\citet{ben-tal09}.

We now state and prove the necessary condition for robust efficiency:
\begin{theorem}[Necessary condition for robust efficiency]\label{thm-nec-robust-mco}
A solution is robust efficient to problem~\eqref{robust-mco} only if it is optimal to some scalarized problem according to formulation~\eqref{robust-mco-scalarized} with strictly increasing scalarizing function. 
\end{theorem}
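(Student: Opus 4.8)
\emph{Proof plan.} Assume $x^*$ is robust efficient; the plan is to exhibit a strictly increasing $u:\Re^n\to\Re$ for which $x^*$ solves the scalarized problem~\eqref{robust-mco-scalarized}, generalizing the reference-point (Chebyshev-type) scalarization that underlies the deterministic Theorem~\ref{thm-nec-mco}. Write $Y^* := f(x^*;\S)\subseteq\Re^n$ for the image of $x^*$ under the uncertainty, and set
\[
u(y) \;:=\; \inf_{y^*\in Y^*}\ \max_{i=1,\dots,n}\,\bigl(y_i - y^*_i\bigr),
\]
that is, $u(y)$ is the least $t\in\Re$ with $y_i\le y^*_i+t$ for all $i$ and some $y^*\in Y^*$; equivalently, its open sublevel set satisfies $\{y:u(y)<0\}=Y^*-\operatorname{int}\Re^n_+$. (That $u$ is a genuine real-valued function is the one technical point; see below.) The first step is to check that $u$ is strictly increasing: if $y<y'$ and $\delta:=\min_i(y'_i-y_i)>0$, then for every $y^*\in Y^*$ one has $\max_i(y'_i-y^*_i)\ge\delta+\max_i(y_i-y^*_i)$, so taking the infimum over $Y^*$ gives $u(y')\ge u(y)+\delta>u(y)$.

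The second step evaluates~\eqref{robust-mco-scalarized} at $x^*$: for each $s\in\S$, choosing $y^*=f(x^*;s)\in Y^*$ in the infimum shows $u(f(x^*;s))\le 0$, hence $\max_{s\in\S}u(f(x^*;s))\le 0$. The crux is then to show that no $x\in\X$ does strictly better. Suppose for contradiction that $\max_{s\in\S}u(f(x;s))<\max_{s\in\S}u(f(x^*;s))\le 0$. Then $u(f(x;s))<0$ for every $s\in\S$, so by the definition of $u$ there is, for each $s$, some $y^*(s)\in Y^*$ with $f_i(x;s)<y^*_i(s)$ for all $i$, i.e.\ $f(x;s)\in y^*(s)-\operatorname{int}\Re^n_+\subseteq Y^*-(\Re^n_+\setminus\{0\})$. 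Since this holds for every $s\in\S$, we conclude $f(x;\S)\subseteq f(x^*;\S)-(\Re^n_+\setminus\{0\})$, contradicting the robust efficiency of $x^*$ (Definition~\ref{robust-efficiency}). Hence $x^*$ is optimal to~\eqref{robust-mco-scalarized} for this strictly increasing $u$, which is the claim.

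I expect the only real obstacle to be the bracketed well-definedness issue: one must ensure that $u$ is finite-valued (finiteness from above is automatic since $Y^*$ is nonempty, while finiteness from below asks that $Y^*=f(x^*;\S)$ be bounded above) and that the inner maximum over $\S$ in~\eqref{robust-mco-scalarized} is attained; both are accommodated by the standing compactness and semicontinuity hypotheses of Section~\ref{sec-robust}, and if $Y^*$ should fail to be bounded or closed one replaces it in the definition of $u$ by a truncation or closure that leaves every step above intact. Conceptually the argument is merely the observation that the open sublevel set $\{u<0\}=Y^*-\operatorname{int}\Re^n_+$ is contained in the robust-domination region $Y^*-(\Re^n_+\setminus\{0\})$ while the nonstrict sublevel set $\{u\le 0\}$ still contains all of $Y^*$; note also that $u$ is in general not convex (being an infimum of convex functions), consistent with the paper's assertion that convexity cannot be obtained here.
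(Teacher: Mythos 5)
Your proposal is correct and follows essentially the same route as the paper: your $u(y)=\inf_{y^*\in f(x^*;\S)}\max_i (y_i-y_i^*)$ coincides with the paper's scalarizing function $\min_{z\in f(x^*;\S)-\Re^n_+}\max_i (y_i-z_i)$ (adding $-\Re^n_+$ to the index set does not change the infimum), and the three steps---strict increasingness, $\max_s u(f(x^*;s))\le 0$, and the contradiction with Definition~\ref{robust-efficiency} if some $x$ did strictly better---mirror the paper's argument. Your explicit $\delta=\min_i(y_i'-y_i)$ bound for strict increasingness and your remark on finiteness of the infimum are, if anything, slightly more careful than the published proof.
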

\proof~Let $x^*$ be a robust efficient solution to~\eqref{robust-mco} and $u$ be the scalarizing function 
\begin{equation}\label{lp-fun}
u(y) = \displaystyle \min_{z \in f(x^*;\S) - \Re^n_+} \max_{i=1,\ldots,n} y_i-z_i,
\end{equation}
i.e., $u(y)$ measures the signed maximum distance between $y$ and the boundary of \hbox{$f(x^*;\S)-\Re^n_+$}. This function is strictly increasing because
\begin{equation*}
\begin{split}
  y < w  &\Rightarrow  y - z < w - z \quad \forall z \in \Re^n \\
%&\Rightarrow  y_i - z_i < w_i - z_i \quad \forall i=1,\ldots,n, \quad \forall z \in \Re^n \\
 &\Rightarrow  \max_{i=1,\ldots,n} y_i - z_i < \max_{i=1,\ldots,n} w_i - z_i \quad \forall z \in \Re^n \\
 &\Rightarrow  u(y) < u(w).
\end{split}
\end{equation*}

To see that $x^*$ is optimal to minimization of $u$ over $\X$, observe that
\[
\max_{s \in \S} u(f(x^*;s)) = 0 \leq \max_{s\in\S} u(f(x;s))
\]
for all feasible $x$. The equality is due to the construction of $u$ while the inequality is due to the fact that 
\[
\textrm{$u(y) \geq 0~\forall y \not \in f(x^*;\S) - (\Re^n_+ \setminus \{0\})$}
\]
and that, by robust efficiency of $x^*$, there is no feasible $x$ such that 
\[
f(x;\S) \subseteq f(x^*;\S)-(\Re^n_+ \setminus \{0\}).
\]
Hence, $u$ is a strictly increasing scalarizing function under which $x^*$ is optimal. \hfill \endproof

Theorem~\ref{thm-nec-robust-mco} constitutes a counterpart of Theorem~\ref{thm-nec-mco} for robust multiobjective optimization. It is, however, not a direct parallel of Theorem~\ref{thm-nec-mco} because it is not stated with respect to convex scalarizing functions. The following example shows that a strengthening of the result to convex scalarizing functions is not possible:
\begin{example}\label{ex-rob-is-not-optimal-to-convex}
\emph{There are uncertain convex multiobjective problems with robust efficient solutions that are not optimal to any scalarized problem with strictly increasing and convex scalarizing function.} To see this, we consider the following robust counterpart of an uncertain convex multiobjective problem:
\begin{equation}\label{problem-1}
\minimize{x \in [0,1]} \max_{s = s_1, s_2, s_3} f(x;s), 
\end{equation}
where
\begin{eqnarray*}
f(x;s_1) &=& x ( 0 ~ 2 )^T + (1-x) ( 1 ~ 4 )^T,\\
f(x;s_2) &=& x ( 2 ~ 2 )^T + (1-x) ( 1 ~ 1 )^T,\\
f(x;s_3) &=& x ( 2 ~ 0 )^T + (1-x) ( 4 ~ 1 )^T.
\end{eqnarray*}
The image under the objective function mapping of the extreme points of the feasible set of this problem is illustrated in Figure~\ref{fig-not-che}.
\begin{figure}[hbtp!]
\centering
\includegraphics[width=6.5cm]{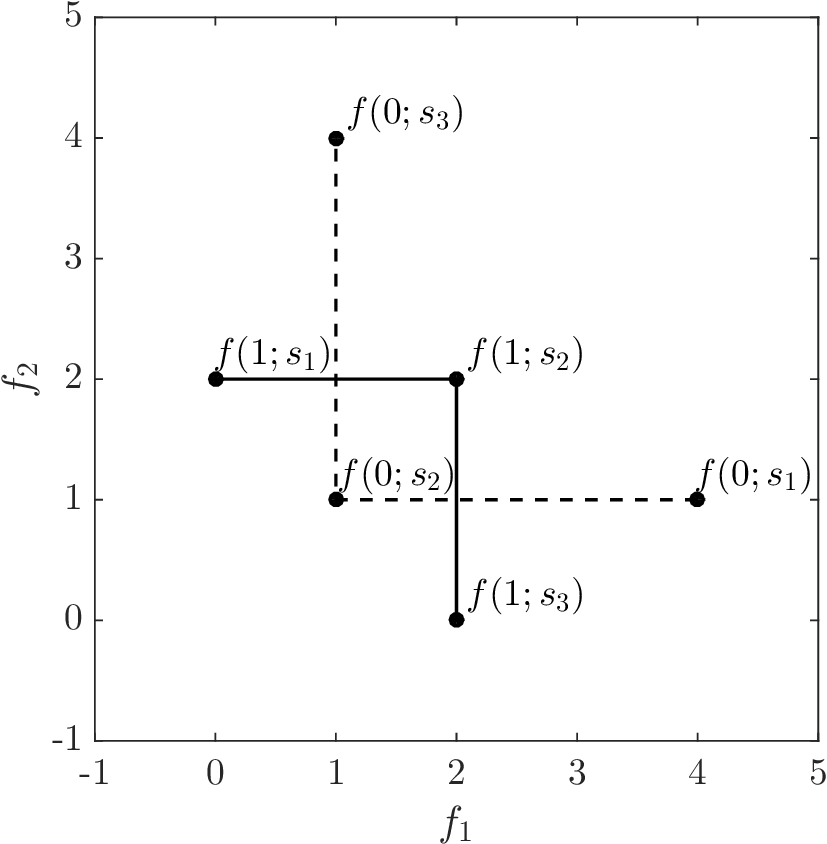}
\caption{Image under the objective function mapping of the extreme points of the feasible set of formulation~\eqref{problem-1}. The robust efficient solution $x=0$ (dashed) is not optimal to any strictly increasing and convex scalarizing function.}\label{fig-not-che}
\end{figure}

Because $f(x;s_2) > f(0;s_2)$ for any $x \in (0,1]$, there is no $x \in [0,1]$ such that \hbox{$f(x;\S) \subseteq f(0;\S) - (\Re_+^n \setminus \{0\})$}, so the solution $x=0$ is robust efficient. Under scalarization with any strictly increasing and convex scalarizing function \hbox{$u : \Re^2 \to \Re$}, however, the solution $x=0$ is dominated by $x=1$, i.e.,
\begin{equation}\label{ex1}
  \max_{s=s_1,s_2,s_3} u(f(1;s)) < \max_{s=s_1,s_2,s_3} u(f(0;s)).
\end{equation}
To see this, we observe that because $u$ is increasing, it holds that
  \[
  \max_{s=s_1,s_2,s_3} u(f(1;s)) = u(f(1;s_2))
  \]
  and
  \[
  \max_{s=s_1,s_2,s_3} u(f(0;s)) =  \max_{s=s_1,s_3} u(f(0;s)).
  \]
Hence, if
\begin{equation}\label{ex1-2}
u(f(1;s_2)) < \max_{s=s_1,s_3} u(f(0;s)),
\end{equation}
then~\eqref{ex1} follows. Because \hbox{$f(1;s_2) = (2~2)^T$}, \hbox{$f(0;s_1) = (1~4)^T$}, and \hbox{$f(0;s_3) = (4~1)^T$}, the inequality~\eqref{ex1-2} can be derived in the following manner:
\begin{eqnarray*}
u\left( ( 2 ~ 2 )^T \right) &< & u\left( \frac{1}{2} ( 1 ~ 4 )^T + \frac{1}{2} ( 4 ~ 1 )^T \right)\\
&\leq& \frac{1}{2} u\left( ( 1 ~ 4 )^T \right) + \frac{1}{2} u\left( ( 4 ~ 1 )^T \right)\\
&\leq& \max_{\alpha \in [0,1]} \left\{ \alpha u\left( ( 1 ~ 4 )^T \right) + (1 -\alpha) u\left( ( 4 ~ 1 )^T \right) \right\}\\
&=& \max \left\{ u\left( ( 1 ~ 4 )^T \right), u\left( ( 4 ~ 1 )^T \right) \right\},
\end{eqnarray*}
where the inequalities are respectively due to that $u$ is strictly increasing, that $u$ is convex (Jensen's inequality), and that \hbox{$1/2 \in [0,1]$}. The equality is due to that the maximum over all convex combinations of two numbers is attained for one of the two numbers. This shows that there are convex problems with robust efficient solutions that are not optimal under any strictly increasing and convex scalarization.
\end{example}

In order to extend Theorem~\ref{thm-suff-mco} to robust efficiency, we first prove the following lemma, which says that if one feasible solution dominates another in Pareto sense, then the dominance relation also holds under increasing scalarizations:
\begin{lemma}\label{strict-ineq-prim}
For two feasible solutions $x$ and $x'$ to problem~\eqref{robust-mco}, it holds that if 
\[
 f(x'; \S) \subseteq f(x;\S) - (\Re^n_+ \setminus \{0\}),
 \] 
then
\[
\max_{s \in \S} u( f(x'; s)) \leq \max_{s \in \S} u( f(x; s)) 
\]
for all upper semicontinuous and increasing functions \hbox{$u: \Re^n \to \Re$}. If $u$ is strongly increasing, then the result holds with strict inequality.
\end{lemma}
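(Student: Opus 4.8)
The plan is to reduce the claimed inequality between worst-case scalarized values to a scenariowise comparison, exploiting the fact that the set inclusion $f(x';\S) \subseteq f(x;\S) - (\Re^n_+\setminus\{0\})$ encodes, for every scenario of $x'$, the existence of a dominating scenario of $x$. The monotonicity of $u$ then transports each scenariowise comparison to the scalarized values, and taking worst cases finishes the argument.

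Concretely, I would first fix an arbitrary $s' \in \S$. By hypothesis, $f(x';s') \in f(x;\S) - (\Re^n_+\setminus\{0\})$, so, unpacking the definition of a set difference, there exist $s \in \S$ and $d \in \Re^n_+\setminus\{0\}$ with $f(x';s') = f(x;s) - d$; in particular $f(x';s') \leq f(x;s)$ and $f(x';s') \neq f(x;s)$. Applying the definition of an increasing function to the pair $f(x';s') \leq f(x;s)$ gives $u(f(x';s')) \leq u(f(x;s)) \leq \max_{t \in \S} u(f(x;t))$. Since $s' \in \S$ was arbitrary, taking the maximum over $s'$ yields $\max_{s' \in \S} u(f(x';s')) \leq \max_{t \in \S} u(f(x;t))$, which is the first assertion. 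If $u$ is strongly increasing, the same pair $f(x';s') \leq f(x;s)$, $f(x';s') \neq f(x;s)$ gives instead $u(f(x';s')) < u(f(x;s)) \leq \max_{t \in \S} u(f(x;t))$ for every $s' \in \S$.

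To pass from ``$u(f(x';s')) < M$ for every $s'$'' (with $M := \max_{t \in \S} u(f(x;t))$) to ``$\max_{s' \in \S} u(f(x';s')) < M$'' I would invoke the standing assumptions of Section~\ref{sec-robust}: compactness of $\S$ together with the lower semicontinuity hypotheses ensure that the optimization problems under consideration have well-defined solution sets, so the maximum defining $\max_{s' \in \S} u(f(x';s'))$ is attained, say at $\bar s' \in \S$. Then $\max_{s' \in \S} u(f(x';s')) = u(f(x';\bar s')) < M = \max_{t \in \S} u(f(x;t))$, as required.

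I expect this last attainment point to be the only genuine subtlety: a supremum of quantities each strictly below $M$ need not itself be strictly below $M$, so the strict case really does rely on the maximum over $\S$ being attained rather than merely a supremum. The rest of the proof is immediate bookkeeping with the definitions of increasing and strongly increasing functions once the set inclusion has been read scenario by scenario.
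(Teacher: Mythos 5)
Your proof is correct, and it reaches the conclusion by a more direct route than the paper's. The paper also begins by unpacking the inclusion scenario by scenario (for each $s'$ there are $s \in \S$ and $\lambda \in \Re^n_+ \setminus \{0\}$ with $f(x';s') = f(x;s) - \lambda$), but instead of comparing $u(f(x';s'))$ with $u(f(x;s))$ pointwise it introduces a perturbation $\epsilon(s') = \alpha\lambda$ with $\alpha \in (0,1)$, forms the intermediate set $\{f(x';s) + \epsilon(s) : s \in \S\}$, which still lies in $f(x;\S) - (\Re^n_+ \setminus \{0\})$, and then chains maxima over nested sets,
\[
\max_{y \in f(x';\S)} u(y) \;\leq\; \max_{y \in \{f(x';s)+\epsilon(s)\,:\,s\in\S\}} u(y) \;\leq\; \max_{y \in f(x;\S) - \Re^n_+} u(y) \;=\; \max_{s \in \S} u(f(x;s)),
\]
with the first inequality becoming strict for strongly increasing $u$. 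Your scenariowise argument dispenses with the $\epsilon$-device and the enlarged-domain step entirely, at no loss of generality. You are also right that the only delicate point is passing from ``$u(f(x';s')) < M$ for every $s'$'' to strict inequality between the maxima, which requires the left-hand maximum to be attained; the paper's proof has exactly the same dependence (its strict first inequality compares the maximum over $f(x';\S)$ with the maximum over the perturbed set, and likewise needs an attaining $\bar s'$), but leaves it implicit under the standing convention that maxima over the compact set $\S$ are attained. So your version is not only correct but slightly more explicit about the one genuine subtlety.
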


\proof~Assume that the feasible solutions $x$ and $x'$ satisfy
\[ 
f(x'; \S) \subseteq f(x;\S) - (\Re^n_+ \setminus \{0\}). 
\]
Then for any \hbox{$s' \in \S$}, there exists an \hbox{$s \in \S$} and $\lambda \in \Re_+^n\setminus\{0\}$ such that \hbox{$f(x';s') = f(x;s) - \lambda$}. Hence, for any increasing scalarizing function $u$, it holds that
\begin{equation*}
  \forall s' \in \S~\exists s \in \S, \lambda \in \Re_+^n\setminus\{0\} : u(f(x';s')) = u(f(x;s)-\lambda) \leq u(f(x;s)),
\end{equation*}
where the inequality is due to that $u$ is increasing. This implies that
\begin{equation*}
 \max_{s \in \S} u(f(x';s)) \leq \max_{s \in \S} u(f(x;s)).
\end{equation*}
If $u$ is strongly increasing, the inequalities are strict. \hfill \endproof

The following counterpart of Theorem~\ref{thm-suff-mco} for robust multiobjective optimization is a direct corollary of Lemma~\ref{strict-ineq-prim}:
\begin{theorem}[Sufficient condition for robust efficiency]\label{thm-suff-robust-mco}
A solution is robust efficient to problem~\eqref{robust-mco} if it is optimal to some scalarized problem according to formulation~\eqref{robust-mco-scalarized} with strongly increasing scalarizing function.
\end{theorem}
A version of this theorem specialized to strongly increasing and linear scalarizations has previously been shown by~\citet[Theorem 4.3]{ehrgott14}.

\section{Convex robust multiobjective optimization}\label{sec-convex-hull-efficiency}
Because of the superior tractability of convex optimization, the scalarizing functions that are used in practice are often convex. Example~\ref{ex-rob-is-not-optimal-to-convex} shows that there are robust efficient solutions that are not optimal under any strictly increasing and convex scalarizing function. This fact motivates the characterization of the subset of robust efficient solutions that are optimal under such scalarization. To this end, we introduce the concept of \emph{convex hull efficiency}, and prove counterparts of Theorems~\ref{thm-nec-mco} and~\ref{thm-suff-mco} for convex scalarizing functions applied to uncertain multiobjective problems.

\subsection{Convex hull efficiency}
We use the connection between minimax robustness and probabilistic optimization to define convex hull efficiency. If $f$ is scalar-valued, then problem~\eqref{robust-mco} is equivalent to the following:
\begin{equation}\label{robust-exp}
   \begin{array}{ll}
     \minimize{x \in \X} \quad & \displaystyle \max_{\pi \in \allP}~\Ex_{\pi}[f(x;S)],
   \end{array}
\end{equation}
where $\allP$ is is the set of all probability distributions over $\S$, see, e.g.,~\citet{shapiro02}. If $f$ is vector-valued, then problems~\eqref{robust-mco} and~\eqref{robust-exp} do not generally have the same robust efficient sets. The robust efficient solutions to~\eqref{robust-exp} satisfy the following more restrictive definition of robustness with respect to~\eqref{robust-mco}:
\begin{definition}[Convex hull robust Pareto efficiency]\label{convex-hull-efficiency}
A feasible solution $x^*$ to problem~\eqref{robust-mco} is \emph{convex hull robust Pareto efficient} (or simply \emph{convex hull efficient}) if there is no feasible $x$ such that \[f(x;\S) \subseteq \conv(f(x^*;\S)) - (\Re^n_+ \setminus \{0\}).\]
\end{definition}

Below, we formally show that the robust efficient solutions to formulation~\eqref{robust-exp} and the convex hull efficient solutions to formulation~\eqref{robust-mco} coincide. We also show that the convex hull efficient solutions constitute a proper subset of the robust efficient solutions in the general case, and that the two sets coincide under some special circumstances. The concept of convex hull efficiency is illustrated in Figure~\ref{fig-che}.

\begin{figure}[hbtp]
\centering
\includegraphics[width=6.5cm]{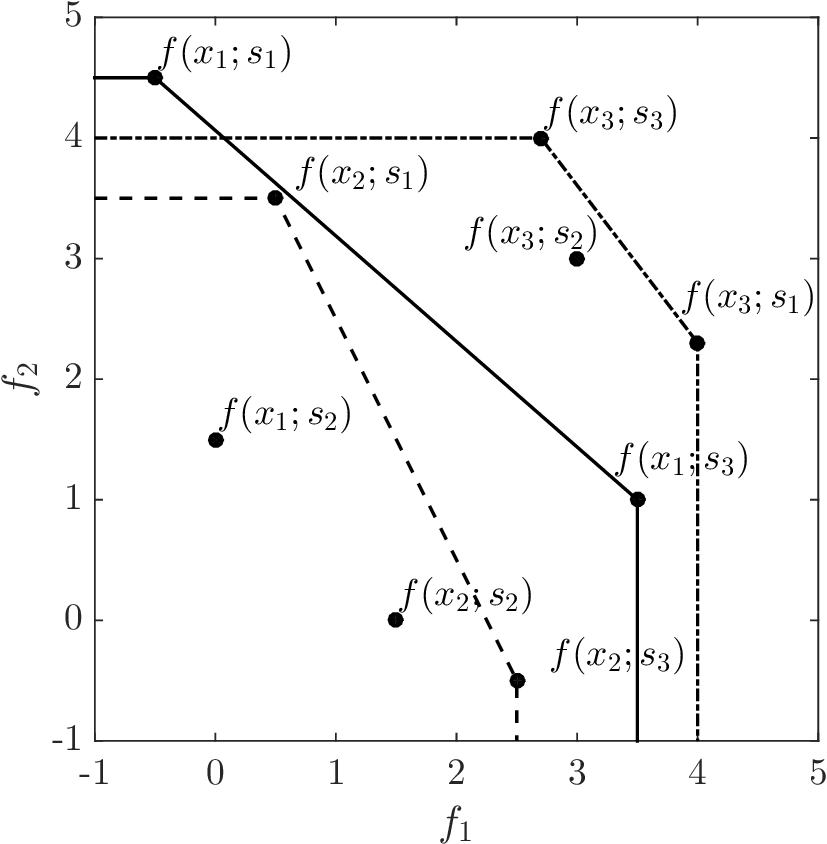}
\caption{Convex hull efficiency for an uncertain multiobjective problem with two objectives $f_1$ and $f_2$, a feasible set \hbox{$\X = \{ x_1, x_2, x_3 \}$}, and an uncertainty set \hbox{$\S=\{s_1,s_2,s_3\}$}. The solution $x_2$ (dashed) is convex hull efficient, while $x_1$ (solid) and $x_3$ (dot-dashed) are dominated by $x_2$ according to Definition~\ref{convex-hull-efficiency}. Note that the set of convex hull efficient solutions is different from the set of robust efficient solutions (compare to Figure~\ref{fig-re}).}
\label{fig-che}
\end{figure}

\begin{proposition}\label{che-exp}
A solution is convex hull efficient to problem~\eqref{robust-mco} if and only if it is robust efficient to problem~\eqref{robust-exp}.
\end{proposition}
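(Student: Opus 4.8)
The plan is to reduce the equivalence to a single convexity observation. First I would note that, for any feasible $x$, the attainable set of expected objective vectors in~\eqref{robust-exp} is exactly $\conv(f(x;\S))$: since $\allP = \conv(\Dirac)$, every $\pi \in \allP$ has the form $\pi = \sum_{i\in I}\lambda_i \delta(s_i)$ with $s_i\in\S$, $\lambda_i > 0$, $\sum_{i\in I}\lambda_i = 1$, and $\abs{I} < \infty$, so $\Ex_\pi[f(x;S)] = \sum_{i\in I}\lambda_i f(x;s_i) \in \conv(f(x;\S))$, and conversely every element of $\conv(f(x;\S))$ arises in this way. Consequently, $x^*$ is \emph{not} robust efficient to~\eqref{robust-exp} precisely when there is a feasible $x$ with $\conv(f(x;\S)) \subseteq \conv(f(x^*;\S)) - (\Re^n_+\setminus\{0\})$, whereas $x^*$ is \emph{not} convex hull efficient to~\eqref{robust-mco} precisely when there is a feasible $x$ with $f(x;\S) \subseteq \conv(f(x^*;\S)) - (\Re^n_+\setminus\{0\})$.

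It therefore suffices to show that, for a fixed feasible $x$, the two inclusions $f(x;\S) \subseteq C$ and $\conv(f(x;\S)) \subseteq C$ are equivalent, where $C := \conv(f(x^*;\S)) - (\Re^n_+\setminus\{0\})$; taking contrapositives and quantifying over feasible $x$ then gives the proposition. One implication is immediate because $f(x;\S) \subseteq \conv(f(x;\S))$. For the other it is enough to know that $C$ is convex: then $f(x;\S)\subseteq C$ forces $\conv(f(x;\S))\subseteq C$, since $\conv(f(x;\S))$ is the smallest convex set containing $f(x;\S)$.

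The only nontrivial step, and the one I would treat most carefully, is the convexity of $C$. I would establish it by writing $\Re^n_+\setminus\{0\} = \Re^n_+ \cap \{\,y\in\Re^n : \sum_{i=1}^n y_i > 0\,\}$, which exhibits $\Re^n_+\setminus\{0\}$ as the intersection of two convex sets and hence as convex; then $-(\Re^n_+\setminus\{0\})$ is convex as well, and $C = \conv(f(x^*;\S)) + (-(\Re^n_+\setminus\{0\}))$ is a Minkowski sum of convex sets, hence convex. (Equivalently, one checks directly that a convex combination of nonzero vectors of $\Re^n_+$ is again a nonzero vector of $\Re^n_+$.) Beyond this I anticipate no obstacles: the argument essentially says that once the reference set of objective vectors has been convexified, replacing the scenario objectives $f(x;\S)$ by their convex combinations changes nothing.
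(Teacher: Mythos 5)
Your proof is correct and follows essentially the same route as the paper's: you identify the attainable set of expected objective vectors under $\allP=\conv(\Dirac)$ with $\conv(f(x;\S))$, and then pass between the inclusions $f(x;\S)\subseteq C$ and $\conv(f(x;\S))\subseteq C$ using the convexity of the dominated set $C=\conv(f(x^*;\S))-(\Re^n_+\setminus\{0\})$, exactly the two ingredients the paper uses. The only (minor) difference is that you explicitly verify the convexity of $\Re^n_+\setminus\{0\}$, a fact the paper takes for granted.
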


\proof~We first show that all robust efficient solutions to~\eqref{robust-exp} are convex hull efficient to~\eqref{robust-mco}. Let \hbox{$g(x;\pi) = \Ex_\pi [ f(x;S) ]$}, so that~\eqref{robust-exp} can be put on the form of~\eqref{robust-mco} according to
\[
\min_{x \in \X} \max_{\pi \in {\allP}} g(x;\pi).
\]
For any robust efficient solution $x^*$ to this problem, there is by Definition~\ref{robust-efficiency} no feasible $x$ such that
\[
g(x;{\allP}) \subseteq g(x^*;{\allP}) - (\Re^n_+ \setminus \{0\}).
\]
We use that $\allP = \conv(\Dirac)$, where $\Dirac$ is the set of all Dirac distributions $\delta(s)$, assigning unit probability to the random variable $S$ taking on the value $s$, for $s$ in $\S$. Then, because
\begin{equation*}
\begin{split}
g(x;\allP) &= \left\{ \Ex_\pi [ f(x;S) ]: \pi \in \allP \right\} \\
&= \left\{ \sum_{i \in I} \lambda_i \Ex_{\delta(s_i)} [ f(x;S) ]: \delta(s_i) \in \Dirac,~ \lambda_i > 0,~ \sum_{i \in I} \lambda_i =1,~ |I| < \infty \right\} \\
&= \left\{ \sum_{i \in I} \lambda_i f(x;s_i): s_i \in \S,~ \lambda_i > 0,~ \sum_{i \in I} \lambda_i =1,~ |I| < \infty \right\} \\
&= \conv(f(x;\S)),
\end{split}
\end{equation*}
there is no feasible $x$ such that
\begin{equation}\label{conv-subseteq}
\conv(f(x;\S)) \subseteq \conv(f(x^*;\S)) - (\Re^n_+ \setminus \{0\}). 
\end{equation}
Therefore, there is no feasible $x$ such that
\begin{equation}\label{f-subseteq}
f(x;\S) \subseteq \conv(f(x^*;\S)) - (\Re^n_+ \setminus \{0\}).
\end{equation}
Hence, $x^*$ is convex hull efficient to~\eqref{robust-mco}. Taking these steps backwards shows the converse. Note that~\eqref{f-subseteq} yields~\eqref{conv-subseteq} because the right-hand side of~\eqref{f-subseteq} is a convex set and the convex hull of a set is a subset of all convex sets that contain the set. \hfill \endproof

\begin{proposition}\label{che-is-efficient}
A solution is convex hull efficient to problem~\eqref{robust-mco} only if it is robust efficient to the same problem.
\end{proposition}

\proof~Let $x^*$ be convex hull efficient. Then, there exists no feasible $x$ such that
\[
f(x;\S) \subseteq \conv(f(x^*;\S)) - (\Re^n_+ \setminus \{0\}),
\] 
which combined with the fact that 
\[
 f(x^*;\S) - (\Re^n_+ \setminus \{0\}) \subseteq \conv(f(x^*;\S)) - (\Re^n_+ \setminus \{0\})
\] 
yields that there exists no feasible $x$ such that
\[
f(x;\S) \subseteq f(x^*;\S) - (\Re^n_+ \setminus \{0\}),
\] 
and, therefore, $x^*$ is robust efficient. \hfill \endproof

A comparison between Figures~\ref{fig-re} and~\ref{fig-che} shows that there are robust efficient solutions that are not convex hull efficient. The following example strengthens this result to convex problems.
\begin{example}\label{ex-efficient-is-not-che}
\emph{There are uncertain convex multiobjective problems with robust efficient solutions that are not convex hull efficient.} To see this, we consider the problem~\eqref{problem-1} from Example~\ref{ex-rob-is-not-optimal-to-convex}. The solution \hbox{$x = 0$} is robust efficient because there exists no feasible $x$ such that 
\[
f(x;\S) \subseteq f(0;\S) - (\Re^n_+ \setminus \{0\}). 
\]
The solution \hbox{$x=1$}, however, satisfies
\[ 
f(1;\S) \subseteq \conv(f(0;\S)) - (\Re^n_+ \setminus \{0\}),
\]
which shows that \hbox{$x=0$} is not convex hull efficient.
\end{example}

However, there are also situations in which convex hull efficiency and robust efficiency coincide:
\begin{example}\label{remark}
\emph{Under some conditions, the robust efficient solutions and the set of convex hull efficient solutions coincide.} This can be exemplified by any of the following conditions:
\begin{itemize}
\item[(a)] $n=1$;
\item[(b)] $\S$ is a singleton set; or
\item[(c)] $f$ is a linear function of $s$ and $\S$ is a convex set.
\end{itemize}
This holds true because
\begin{equation}\label{eq-che-is-e}
\conv(f(x;\S)) - (\Re^n_+ \setminus \{0\}) = f(x;\S) - (\Re^n_+ \setminus \{0\})
\end{equation}
for any feasible $x$ under any of the stated conditions, as shown as follows:
\begin{itemize}
\item[(a)]
If $n=1$, then~\eqref{eq-che-is-e} holds for any feasible $x$ because
\[
\{ a - r : a \in \conv( A ),~r \in \Re,~r > 0 \} = \{ a - r : a \in A,~r \in \Re,~r > 0 \}
\]
for any \hbox{$A \subseteq \Re$}.

\item[(b)]
If \hbox{$\S=\{s\}$}, then~\eqref{eq-che-is-e} holds for any feasible $x$ because the convex hull of a point is the point itself. 

\item[(c)] 
If $f$ is linear in $s$ and $\S$ is convex, then~\eqref{eq-che-is-e} holds for any feasible $x$ because the image of a convex set under a linear mapping is convex, and the convex hull of a convex set is the set itself. 
\end{itemize}
\end{example}

\subsection{Necessary and sufficient conditions for convex hull efficiency}
In this section, we prove counterparts of Theorems~\ref{thm-nec-mco} and~\ref{thm-suff-mco} for robust multiobjective optimization that concerns scalarization of~\eqref{robust-mco} with some convex \hbox{$u: \Re^n \to \Re$} according to~\eqref{robust-mco-scalarized}. We first state and show the necessary condition for convex hull efficiency:

\begin{theorem}[Necessary condition for convex hull efficiency]\label{thm-nec-che-robust-mco}
A solution is convex hull efficient to problem~\eqref{robust-mco} only if it is optimal to some scalarized problem according to formulation~\eqref{robust-mco-scalarized} with strictly increasing and convex scalarizing function. 
\end{theorem}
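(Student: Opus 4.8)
The plan is to imitate the proof of Theorem~\ref{thm-nec-robust-mco}, replacing the set $f(x^*;\S)$ by its convex hull throughout, and then to check that this modification is exactly what makes the resulting scalarizing function convex. Concretely, given a convex hull efficient solution $x^*$, I would take
\[
u(y) = \min_{z \in \conv(f(x^*;\S)) - \Re^n_+} \max_{i=1,\ldots,n} y_i - z_i,
\]
the signed maximum distance from $y$ to the boundary of $\conv(f(x^*;\S)) - \Re^n_+$. As in Theorem~\ref{thm-nec-robust-mco}, $u$ is strictly increasing, since $y_i + \epsilon_i - z_i > y_i - z_i$ for every $i$ and every admissible $z$ whenever $\epsilon > 0$; and $u$ is finite-valued because the admissible $z$ are bounded above by the bounded set $\conv(f(x^*;\S))$.

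The one genuinely new point is that $u$ is convex. This holds because $(y,z)\mapsto\max_i(y_i - z_i)$ is jointly convex (a pointwise maximum of affine functions) and because $\conv(f(x^*;\S)) - \Re^n_+$ is a convex set, being a sum of two convex sets; partial minimization of a jointly convex function over a convex set yields a convex function of the remaining variables. I expect this to be the crux of the argument: the reason for passing to the convex hull in the construction is precisely that it renders the feasible region of the inner minimization convex, which is what fails for the function~\eqref{lp-fun} used in the proof of Theorem~\ref{thm-nec-robust-mco}, where $f(x^*;\S) - \Re^n_+$ need not be convex.

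It then remains to verify that $x^*$ is optimal to the scalarized problem~\eqref{robust-mco-scalarized} with this $u$. For one direction, each $f(x^*;s)$ with $s\in\S$ lies in $\conv(f(x^*;\S)) \subseteq \conv(f(x^*;\S)) - \Re^n_+$, so $u(f(x^*;s)) \le 0$ and hence $\max_{s\in\S}u(f(x^*;s)) \le 0$. For the other direction, if some feasible $x$ had $\max_{s\in\S}u(f(x;s)) < 0$, then $u(f(x;s)) < 0$ for every $s$, which by the definition of $u$ means that for every $s$ there is $w\in\conv(f(x^*;\S))$ with $w > f(x;s)$ componentwise, i.e. $f(x;\S) \subseteq \conv(f(x^*;\S)) - (\Re^n_+\setminus\{0\})$, contradicting convex hull efficiency of $x^*$. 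Hence $\max_{s\in\S}u(f(x^*;s)) \le 0 \le \max_{s\in\S}u(f(x;s))$ for every feasible $x$, so $x^*$ is optimal to~\eqref{robust-mco-scalarized} for the strictly increasing and convex $u$.

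An alternative, essentially equivalent route is to use Proposition~\ref{che-exp} to rephrase convex hull efficiency of $x^*$ as robust efficiency to formulation~\eqref{robust-exp}, apply Theorem~\ref{thm-nec-robust-mco} to~\eqref{robust-exp} (which directly gives optimality with a strictly increasing $u$ of the form above), and then still prove $u$ convex and use convexity to rewrite $\max_{\pi\in\allP}u(\Ex_\pi[f(x;S)])$ as $\max_{s\in\S}u(f(x;s))$ via Jensen's inequality. Either way, the convexity of $u$ is the load-bearing observation.
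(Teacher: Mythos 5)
Your proposal is correct and takes essentially the same route as the paper: the paper's proof likewise substitutes $\conv(f(x^*;\S))$ for $f(x^*;\S)$ in the construction~\eqref{lp-fun} from Theorem~\ref{thm-nec-robust-mco} and obtains convexity of $u$ by exactly your argument, namely partial minimization of the jointly convex function $(y,z)\mapsto\max_i(y_i-z_i)$ over a convex set independent of $y$. The only difference is that you spell out the optimality verification in more detail, while the paper leaves it as ``a direct analog'' of the earlier proof; your identification of the convexity of the inner feasible region as the load-bearing point is precisely the paper's point.
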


\proof~The proof is a direct analog of the proof of Theorem~\ref{thm-nec-robust-mco}, but with $\conv(f(x^*;\S))$ substituted for $f(x^*;\S)$ and the addition of a proof that $u$ according to~\eqref{lp-fun} is a convex function. Convexity of $u$ follows from the fact that this function is the optimal value function to a minimization problem where $z$ is minimized over a convex set independent from $y$, $y$ is restricted to a convex set (viz.,~$\Re^n$), and the inner-level maximum in~\eqref{lp-fun} is a jointly convex function in $(y,z)$~\citep[Proposition 2.1]{fiacco86}. 
\hfill \endproof

In order to extend Theorem~\ref{thm-suff-mco} to convex hull efficiency, a lemma that is analogous to Lemma~\ref{strict-ineq-prim} is needed.
\begin{lemma}\label{strict-ineq}
For two feasible solutions $x$ and $x'$ to problem~\eqref{robust-mco}, it holds that if 
\[
 f(x'; \S) \subseteq \conv(f(x;\S)) - (\Re^n_+ \setminus \{0\}),
 \] 
then
\[
\max_{s \in \S} u( f(x'; s)) \leq \max_{s \in \S} u\left( f(x; s) \right),
\]
for all upper semicontinuous and increasing and convex functions \hbox{$u: \Re^n \to \Re$}. If $u$ is strongly increasing and convex, then the result holds with strict inequality.
\end{lemma}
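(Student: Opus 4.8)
The plan is to adapt the proof of Lemma~\ref{strict-ineq-prim} almost verbatim, writing $\conv(f(x;\S))$ everywhere $f(x;\S)$ appeared, the one genuinely new ingredient being that the maximum of a \emph{convex} function over the convex hull of a set equals its maximum over the set itself. This is precisely the place where the convexity hypothesis on $u$ is consumed, so I expect the proof to go through for convex $u$ but not otherwise, consistently with Example~\ref{ex-rob-is-not-optimal-to-convex}.

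Concretely, I would first unpack the hypothesis pointwise: for each $s' \in \S$ the inclusion gives finitely many scenarios $s_i \in \S$, weights $\lambda_i > 0$ with $\sum_i \lambda_i = 1$, and a vector $\lambda \in \Re^n_+ \setminus \{0\}$ such that $f(x';s') = \sum_i \lambda_i f(x;s_i) - \lambda$. Exactly as in Lemma~\ref{strict-ineq-prim}, I set $\epsilon(s') = \alpha \lambda$ for a fixed $\alpha \in (0,1)$, so that $\epsilon(s') \in \Re^n_+ \setminus \{0\}$, $\lambda - \epsilon(s') \in \Re^n_+ \setminus \{0\}$, and $f(x';s') + \epsilon(s') = \sum_i \lambda_i f(x;s_i) - (\lambda - \epsilon(s'))$; hence $\{ f(x';s) + \epsilon(s) : s \in \S \} \subseteq \conv(f(x;\S)) - \Re^n_+$. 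Then I would chain: $\max_{s \in \S} u(f(x';s)) \le \max_{s \in \S} u(f(x';s) + \epsilon(s))$ since $u$ is increasing and $\epsilon(s) \ge 0$ (strictly, if $u$ is strongly increasing, since $\epsilon(s) \neq 0$); this is at most $\max_{y \in \conv(f(x;\S)) - \Re^n_+} u(y)$ by enlarging the domain; this equals $\max_{y \in \conv(f(x;\S))} u(y)$ because $u$ is increasing (subtracting a nonnegative vector cannot raise $u$, and $\lambda = 0$ is admissible); and this finally equals $\max_{s \in \S} u(f(x;s))$ because $u$ is convex---any point of $\conv(f(x;\S))$ is a finite convex combination of points $f(x;s_i)$, so Jensen's inequality bounds $u$ at the combination by $\max_i u(f(x;s_i)) \le \max_{s \in \S} u(f(x;s))$, while the reverse inequality is immediate from $f(x;\S) \subseteq \conv(f(x;\S))$.

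I do not foresee a real obstacle; the argument is a short variation on the earlier lemma. The only point that needs a little care is propagating strictness, since a pointwise strict inequality $u(f(x';s')) < \max_{s \in \S} u(f(x;s))$ for every $s'$ does not by itself force the supremum over $s'$ to be strictly smaller. I would resolve this by invoking the standing compactness assumptions (under which the scalarized robust problems are taken to have well-defined optima): $\max_{s \in \S} u(f(x';s))$ is attained at some $s'^{\ast} \in \S$, and then $\max_{s \in \S} u(f(x';s)) = u(f(x';s'^{\ast})) < u(f(x';s'^{\ast}) + \epsilon(s'^{\ast})) \le \max_{s \in \S} u(f(x;s))$ by the chain above, which gives the strict inequality in the strongly increasing case.
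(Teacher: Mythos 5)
Your proof is correct and follows essentially the same route as the paper's: repeat the $\epsilon$-perturbation argument of Lemma~\ref{strict-ineq-prim} with $\conv(f(x;\S))$ in place of $f(x;\S)$, then use increasingness to discard the $-\Re^n_+$ term and Jensen's inequality to collapse the maximum over the convex hull back to $\max_{s\in\S}u(f(x;s))$ (the paper phrases the convex combinations as expectations over $\pi\in\conv(\Dirac)$, but that is only a notational difference). Your explicit handling of how pointwise strictness propagates to the maxima via attainment is in fact more careful than the paper's one-line claim.
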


\proof~Assume that the feasible solutions $x$ and $x'$ satisfy
\[ 
f(x'; \S) \subseteq \conv( f(x;\S) ) - (\Re^n_+ \setminus \{0\}). 
\]
Then for any \hbox{$s' \in \S$}, there exists a probability distribution $\pi \in \allP$ and \hbox{$\lambda \in \Re_+^n\setminus\{0\}$} such that \hbox{$f(x';s') = \Ex_\pi[f(x;S)] - \lambda$}. Hence, for any increasing scalarizing function $u$, it holds that
\begin{equation*}
  \forall s' \in \S~\exists \pi \in \allP, \lambda \in \Re_+^n\setminus\{0\} : u(f(x';s')) = u\left(\Ex_\pi\left[f(x;S)\right]-\lambda\right) \leq u\left(\Ex_\pi\left[f(x;s)\right]\right),
\end{equation*}
which implies that
\begin{equation*}
 \max_{s' \in \S} u(f(x';s')) \leq \max_{\pi \in \allP} u\left(\Ex_\pi\left[f(x;s)\right]\right).
\end{equation*}
The inequalities are strict for strongly increasing $u$. If furthermore $u$ is convex, we have
\begin{equation}\label{jensen}
  \max_{\pi \in \allP} u\left( \Ex_\pi[ f( x; S) ] \right) \leq \max_{ \pi \in \allP}  \Ex_\pi[ u( f(x; S) )] =
  \max_{s \in \S} u( f(x; s) ),
\end{equation}
where the inequality is due to Jensen's inequality and the equality is due to linearity of the expectation operator. \hfill \endproof

The following counterpart of Theorem~\ref{thm-suff-mco} for convex hull efficiency is a direct corollary of Lemma~\ref{strict-ineq}:
\begin{theorem}[Sufficient condition for convex hull efficiency]\label{thm-suff-che-robust-mco}
A solution is convex hull efficient to problem~\eqref{robust-mco} if it is optimal to some scalarized problem according to formulation~\eqref{robust-mco-scalarized} with strongly increasing and convex scalarizing function.
\end{theorem}

The following example shows that Theorem~\ref{thm-nec-convex-mco} does not extend to convex hull efficiency, and hence neither to robust efficiency.
\begin{example}\label{ex-che-is-not-optimal-to-linear}
\emph{There are uncertain convex multiobjective problems with convex hull efficient solutions that are not optimal to any scalarized problem with strictly increasing and linear scalarizing function.} This is exemplified by the following robust counterpart of an uncertain convex multiobjective problem:
\begin{equation}\label{problem-2}
\begin{array}{lll} 
\minimize{x \in \Re^2} ~\quad & \displaystyle \max_{s = s_1,s_2,s_3} f(x;s) & \\
\subject & x_1 + x_2 \leq 1, & \\
         & x_i \geq 0, & i=1,2,
\end{array}
\end{equation}
where
\begin{eqnarray*}
f(x;s_1) &=& x_1 ( 0 ~ 6 )^T + x_2 ( 3 ~ 5 / 2 )^T + (1-x_1-x_2) ( 2 ~ 4 )^T, \\
f(x;s_2) &=& x_1 ( 0 ~ 3 )^T + x_2 ( 3 ~ 0 )^T + (1-x_1-x_2) ( 4 ~ 4 )^T, \\
f(x;s_3) &=& x_1 ( 5 / 2 ~ 3 )^T + x_2 ( 6 ~ 0 )^T + (1-x_1-x_2) ( 4 ~ 2 )^T. 
\end{eqnarray*}
The image under the objective function mapping of the extreme points of the feasible set of this problem is illustrated in Figure~\ref{fig-not-lin-opt}.
\begin{figure}[hbtp!]
\centering
\includegraphics[width=6.5cm]{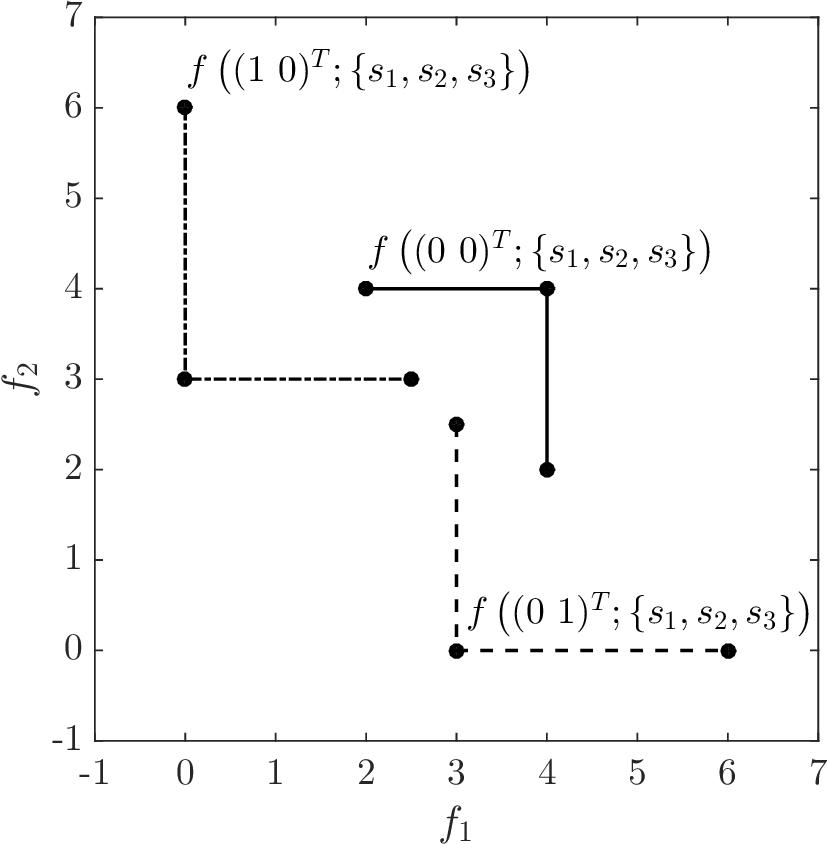}
\caption{Image under the objective function mapping of the extreme points of the feasible set of formulation~\eqref{problem-2}. The convex hull efficient solution \hbox{$x=(0~0)^T$} (solid) is not optimal to any strictly increasing and linear scalarization.}
\label{fig-not-lin-opt}
\end{figure}
The solution \hbox{$x=(0~0)^T$} is convex hull efficient because there exists no feasible solution that is entirely contained in the set \hbox{$\conv(f((0~0)^T;\S)) - (\Re^n_+ \setminus \{0\})$}. Now consider scalarization with an arbitrary strictly increasing and linear function, i.e., \hbox{$u(y) = w_1 y_1 + w_2 y_2$} for some $w$ in \hbox{$\Re_+^2 \setminus \{0\}$}. Without loss of generality, we assume that \hbox{$w_1 + w_2 = 1$} and, by symmetry of the problem, that \hbox{$w_1 \leq w_2$}. These assumptions yield that
\begin{equation*}
\max_{s = s_1,s_2,s_3} u(f((0~1)^T;s)) =  \max \Big\{3w_1+5/2w_2,3w_1,6w_1 \Big\} \leq 3, %\\
\end{equation*}
and
\begin{equation*}
\max_{s = s_1,s_2,s_3} u(f((0~0)^T;s)) =  \max \Big\{2w_1+4w_2,4w_1+4w_2,4w_1+2w_2 \Big\} = 4.% \\
\end{equation*}
This shows that \hbox{$x=(0~1)^T$} (or \hbox{$x=(1~0)^T$}) has a strictly smaller scalarized objective function value than \hbox{$x=(0~0)^T$} with respect to any strictly increasing and linear scalarizing function, and thereby that there are convex problems with convex hull efficient solutions that are not optimal under any strictly increasing and linear scalarization.
\end{example}

We observe that the difference between the necessary conditions for robust efficient solutions (Theorem~\ref{thm-nec-robust-mco}) and those for convex hull efficient solutions (Theorem~\ref{thm-nec-che-robust-mco}) is that for the latter, there is a convex scalarizing function. This observation naturally leads to the question of whether necessary conditions that guarantee a convex scalarizing function can be formulated with respect to a larger subset of the robust efficient solutions. To answer this question, we consider a more general concept of efficiency defined by an arbitrary mapping between subsets of $\Re^n$ according to:
\begin{definition}[$\A$-efficiency]\label{a-efficiency}
Given a mapping \hbox{$\A:2^{\Re^n} \rightarrow 2^{\Re^n}$}, a feasible solution $x^*$ to problem~\eqref{robust-mco} is \emph{$\A$-Pareto efficient} (or simply \emph{$\A$-efficient}) if there is no feasible $x$ such that \[f(x;\S) \subseteq \A( f(x^*;\S) ) - (\Re^n_+ \setminus \{0\}).\]
\end{definition}
One example of a mapping of this type is the convex hull operator $\conv(\cdot)$, which induces a dominance relation according to Definition~\ref{convex-hull-efficiency}. Another example is $\{ \sup(\cdot) \}$ (with the supremum taken componentwise), which induces objectivewise efficiency on the form suggested by~\citet{kuroiwa12}. The following proposition shows that there is no compact-preserving mapping $\A$ (i.e., $\A$ maps compact sets to compact sets) that defines a larger subset of the robust efficient solutions than the convex hull efficient solutions and simultaneously allows the necessary conditions to be stated with respect to convex scalarizing functions:

\begin{proposition}\label{prop-che-is-largest-class}
Let \hbox{$\A : 2^{\Re^n} \rightarrow 2^{\Re^n}$} be a compact-preserving mapping such that
\begin{enumerate}[(i)]
\item A solution is $\A$-efficient to problem~\eqref{robust-mco} if it is convex hull efficient to the same problem.
\item A solution is $\A$-efficient to problem~\eqref{robust-mco} only if it is optimal to some scalarized problem according to formulation~\eqref{robust-mco-scalarized} with strictly increasing and convex scalarizing function.
\end{enumerate}
Then $\A$-efficiency is equivalent to convex hull efficiency.
\end{proposition}

\proof~Let \hbox{$\A : 2^{\Re^n} \rightarrow 2^{\Re^n}$} be any compact-preserving mapping such that conditions (i) and (ii) hold. We first show that for any nonempty compact subset $Y$ of $\Re^{n}$, it holds that
\begin{equation}\label{subset-relation}
\A(Y) - (\Re^n_+\setminus\{0\})  \subseteq \conv(Y) - (\Re^n_+\setminus\{0\}).
\end{equation}
Let \hbox{$p \in \A(Y) - (\Re^n_+ \setminus \{0 \})$} and consider the uncertain multiobjective problem with \hbox{$\X = \{x_1,x_2\}$}, \hbox{$\S = Y$}, and \hbox{$f = f_p$}, where
\begin{equation}\label{a-efficient-function}
f_p(x;s) = \left\{ \begin{array}{ll}s \quad \textrm{ if $x = x_1$} \\ p \quad \textrm{ otherwise} \end{array}  \right.
\end{equation}
(i.e., $f$ is defined such that \hbox{$f(x_1;\S) = Y$} and \hbox{$f(x_2;\S) = \{p\}$}). Because $x_1$ is not $\A$-efficient to this problem, (i) implies that it is neither convex hull efficient, so \hbox{$p \in \conv(Y) - (\Re^n_+ \setminus\{0\})$}.

We now show that for any nonempty compact subset $Y$ of $\Re^{n}$, it holds that
\begin{equation}\label{subset-relation2}
 \mathrm{int}(\conv(Y) - (\Re^n_+\setminus\{0\})) \subseteq \A(Y) - (\Re^n_+\setminus\{0\}) .
\end{equation}
Let \hbox{$q \in \mathrm{int}(\conv(Y) - (\Re^n_+ \setminus \{0\}))$}. Then there exists some \hbox{$r \in \conv(Y) - (\Re^n_+ \setminus \{0\})$} such that \hbox{$q < r$}, and hence, for some positive integer $k$, there exist some \hbox{$y_1,\ldots, y_k \in Y$} and \hbox{$\lambda_1,\ldots,\lambda_k \in [0,1]$} with \hbox{$\sum_{i=1}^k \lambda_i = 1$} such that \hbox{$r \leq \sum_{i=1}^k \lambda_i y_i$}. Because $q < r \leq \sum_{i=1}^k \lambda_i y_i$ implies \hbox{$u(q) < \max_{y\in Y} u(y)$} for any strictly increasing, convex, and upper semicontinuous function $u$, there is no such function $u$ for which $x_1$ is optimal to
\[
\minimize{x = x_1,x_2 } \quad \displaystyle \max_{s \in Y}  u(f_q(x;s)),
\]
where $f_q$ is defined according to~\eqref{a-efficient-function} but with $q$ substituted for $p$. Hence, by (ii), $x_1$ is not $\A$-efficient to the uncertain multiobjective problem with \hbox{$\X=\{x_1, x_2\}$}, \hbox{$\S = Y$}, and \hbox{$f = f_q$}, which implies that \hbox{$q \in \A(Y) - (\Re^n_+ \setminus\{0\})$}, and thus~\eqref{subset-relation2}.

The relations~\eqref{subset-relation} and~\eqref{subset-relation2} imply that
\[
\interior(\conv(Y) - (\Re^n_+\setminus\{0\})) = \interior(\A(Y) - (\Re^n_+\setminus\{0\}) ),
\]
which, together with the fact that \hbox{$\A(Y)- \Re^n_+$} and \hbox{$\conv(Y) - \Re^n_+$} are closed, connected, and full-dimensional, yields
\[
\begin{split}
  \conv(Y) - \Re^n_+ %&= \closure( \conv(Y) - \Re^n_+ )\\
%  &= \closure( \interior ( \conv(Y) - \Re^n_+ ) )\\ % full-dim, connected
  &= \closure( \interior ( \conv(Y) - (\Re^n_+\setminus\{0\}) ) )\\ % ok att ta bort nollan
  &= \closure( \interior ( \A(Y) - (\Re^n_+\setminus\{0\}) ) )\\
%  &= \closure( \interior ( \A(Y) - \Re^n_+ ) )\\
  &= \A(Y) - \Re^n_+.
\end{split}
\]
The equality still holds when the origin is removed from the subtrahends:
\[
\conv(Y) - (\Re^n_+\setminus\{0\}) = \A(Y) - (\Re^n_+\setminus\{0\}) .
\]
%Reason: C - R - R\0 = C - R\0
\hfill \endproof

\section{Numerical example}\label{sec-numerical-example}
Example~\ref{ex-che-is-not-optimal-to-linear} shows that linear scalarizing functions cannot find all convex hull efficient solutions of general uncertain convex multiobjective problems. This property constitutes an important difference to the deterministic case (cf.~Theorem~\ref{thm-nec-convex-mco}) because it shows that the choice of scalarizing function dictates which parts of the convex hull efficient set that can be found, even for convex problems. Guided by this result, we hypothesize that there are considerable differences between the optimal solutions found by different scalarizing functions and, therefore, that it is important to choose a scalarization that is in line with one's preferences. To examine this hypothesis, we investigated the qualitative behavior of different scalarizations on a test problem by minimization of a weighted $p$-norm 
\begin{equation}\label{u}
u(z) = \left( \frac{1}{n}\sum_{i=1}^n w_i |z_i|^p \right)^{1/p},
\end{equation}
for different vectors of weights $w$ and exponents $p$. We then studied how the choice of $p$ influenced the optimized solutions. The scalarizing function~\eqref{u} is strongly increasing and convex if \hbox{$w \in \Re^n_+ \setminus \{0\}$} and \hbox{$1 \leq p < \infty$}, and thereby, by Theorem~\ref{thm-suff-mco}, finds convex hull efficient solutions under these circumstances. With slight abuse of terminology, we henceforth refer to~\eqref{u} simply as a $p$-norm. Note that optimization with this scalarization approaches the objectivewise formulation of~\citet{kuroiwa12} when \hbox{$p \to \infty$}. We first outline the studied test problem and then summarize our numerical results.

\subsection{Test problem}\label{sec-test-problem}
The problem concerns optimization of a proton therapy treatment for a prostate cancer patient. The objectives used in proton therapy typically concern different anatomic regions. Because the uncertainty is geometric, it affects the objectives in a highly correlated manner. This property makes the choice of scalarizing function important, as the results below show.

In proton therapy, a treatment fraction is delivered by a proton beam that is scanned point-by-point over the tumor volume while the beam energy is adjusted in order to control the penetration depth of the protons, see Figure~\ref{fig-patient}. Proton therapy optimization is inherently multiobjective because the risk of an insufficient dose to the cancer must be balanced against the danger of radiation-induced damage to healthy organs~\citep{bortfeld99}. The optimization must simultaneously incorporate robustness against errors that can occur during treatment such as incorrect patient positioning or organ movement~\citep{engelsman13}. Minimax robustness and multiobjective optimization are on the verge of widespread clinical implementation to handle these issues~\citep{baumann16} (but not yet in combination).

To cast the test problem on the form of~\eqref{robust-mco}, we let $x$ represent the scanning time per position and energy (\hbox{$24612$} nonnegatively constrained variables in total) and introduce two objectives $f_1$ and $f_2$:
\begin{equation}\label{objectives}
\begin{array}{ll}
f_1(x;s) &= \displaystyle \int_{v \in \T} (d(v;s)^T x - \dhat)^2 \, dv, \\
f_2(x;s) &= \displaystyle \int_{v \in \R} (d(v;s)^T x)^2 \, dv + 10^{-2} \int_{v \in \U} (d(v;s)^T x)^2 \, dv.
\end{array}
\end{equation}
Here, $f_1$ penalizes deviation from the dose prescription $\dhat$ inside the tumor volume $\T$. The first term of $f_2$ penalizes dose to the rectal volume $\R$ while the second low-weighted term penalizes dose to the unclassified tissue $\U$. The function $d(v;s)$ is a dose kernel such that $d(v;s)^T x$ is the absorbed dose in some point $v$ under scenario $s$ if irradiated with the scanning times $x$. The problem is subject to uncertainty of the form of rigid shifts of the treatment fields, represented by a set $\S$ composed of 27 scenarios: The nominal scenario and all shifts of $1$ cm in the positive and negative axis directions (\hbox{$(\pm 1~0~0)^T$}, \hbox{$(0~\pm 1 ~ 0)^T$}, \hbox{$(0~0~\pm1)^T$}), in all pairwise combinations of axis directions (\hbox{$(\pm 1 ~ \pm 1 ~ 0)^T$}, \hbox{$(0~\pm 1~\pm 1)^T$}, \hbox{$(\pm 1 ~0 \pm 1 )^T$}), and in all three-way combinations of axis directions (\hbox{$(\pm 1~\pm 1~\pm 1 )^T$}).
\begin{figure}[hbtp]
\centering
\includegraphics[width=13cm]{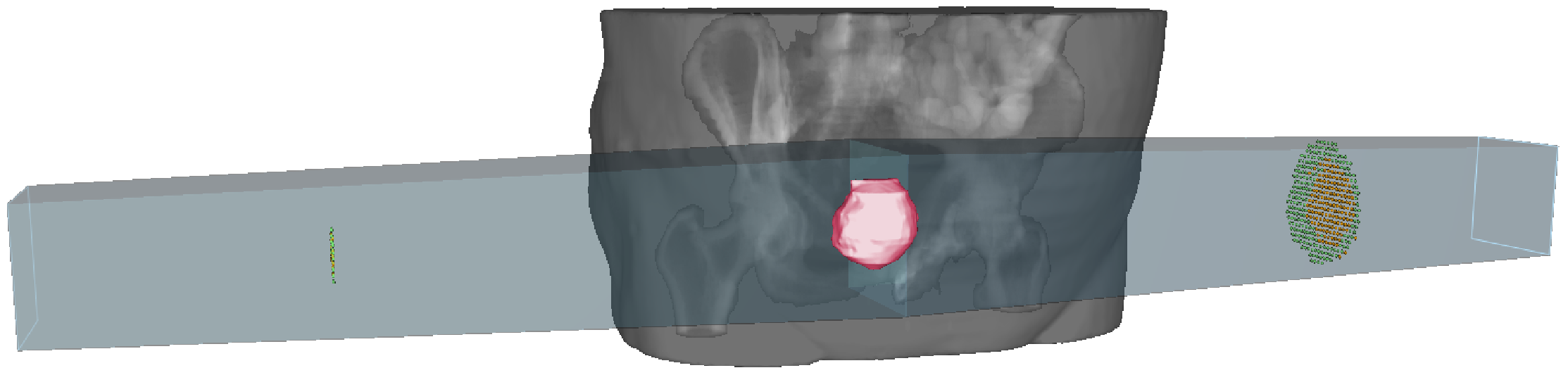}
\caption{Irradiation of a prostate tumor (red) using two parallel-opposed proton fields. The point patterns in the cross-sections of the fields indicate scanning positions.}
\label{fig-patient}
\end{figure}

\subsection{Numerical results}
The test problem was solved in the treatment planning system RayStation v4.0 (RaySearch Laboratories, Stockholm, Sweden), which was modified to permit optimization on the form of~\eqref{robust-mco-scalarized}. We repeated the optimization for \hbox{$p=1,2,$} and $10$ while also varying the weights $w$ in equidistant steps. These values of $p$ were chosen because \hbox{$p=1$} is the smallest parameter value that gives a convex scalarizing function and corresponds to weighted sum minimization, \hbox{$p=2$} is an intermediate value that gives the Euclidean norm, and \hbox{$p=10$} is fairly close to the maximum norm and therefore similar to weighted Tchebycheff scalarization~\citep[see][Equation 2.1.2]{miettinen99}. The parameter values \hbox{$p=1,2,$} and $\infty$ are the common ones in $p$-norm scalarization according to~\citet[Page 68]{miettinen99}. The results for other values between 1 and 10 behave in the manner that can be interpolated from \hbox{$p=1,2,$} and $10$. The integrals in~\eqref{objectives} were evaluated by a discretization of the patient volume into \hbox{$3 \times 3 \times 3$ $\textrm{mm}^3$} volume elements (\hbox{$ \sim 10^6$} elements in total).

Figure~\ref{fig-fvals} depicts the sets \hbox{$\conv(f(x;\S)) - (\Re^n_+ \setminus \{ 0 \})$} associated with the convex hull efficient solutions $x$ found for different weights $w$ and exponents $p$. The sets $f(x;\S)$ that are optimal with respect to an equiweighted scalarizing function $u$ are depicted in greater detail in the lower right subfigure alongside level curves for $u$. Observe that for all weights, the optimal solutions to \hbox{$p=1$} have at most one objective taking on a poor value in any given scenario. In contrast, the optimal solutions to \hbox{$p=2$} and, in particular, \hbox{$p=10$}, have some scenarios where the objective values are poor for both objectives. These solutions, however, have better bounds on the worst case values of the objectives. These results are due to the fact that the optimal $f(x;\S)$ adapts to the shape of the scalarizing function---a minimal $p$-norm disk that encloses $f(x;\S)$, shown as dotted lines in Figure~\ref{fig-fvals}.

\begin{figure}[hbtp]
\centering
\includegraphics[width=6.5cm]{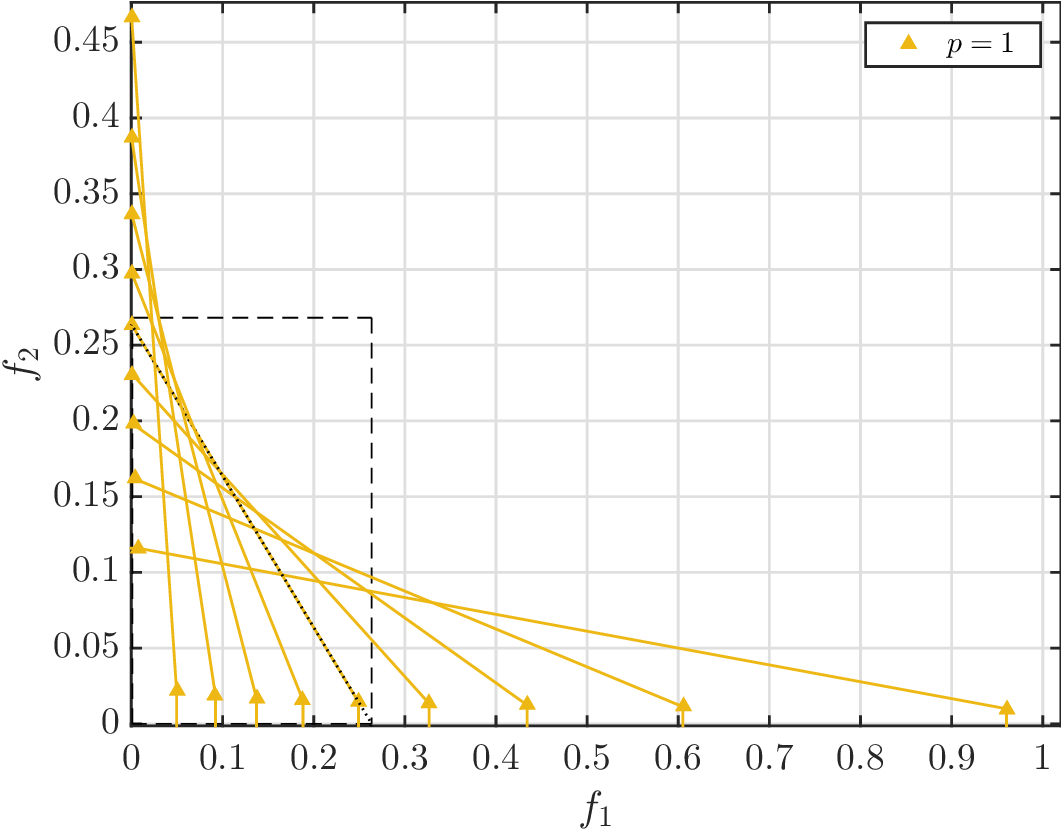}
\includegraphics[width=6.5cm]{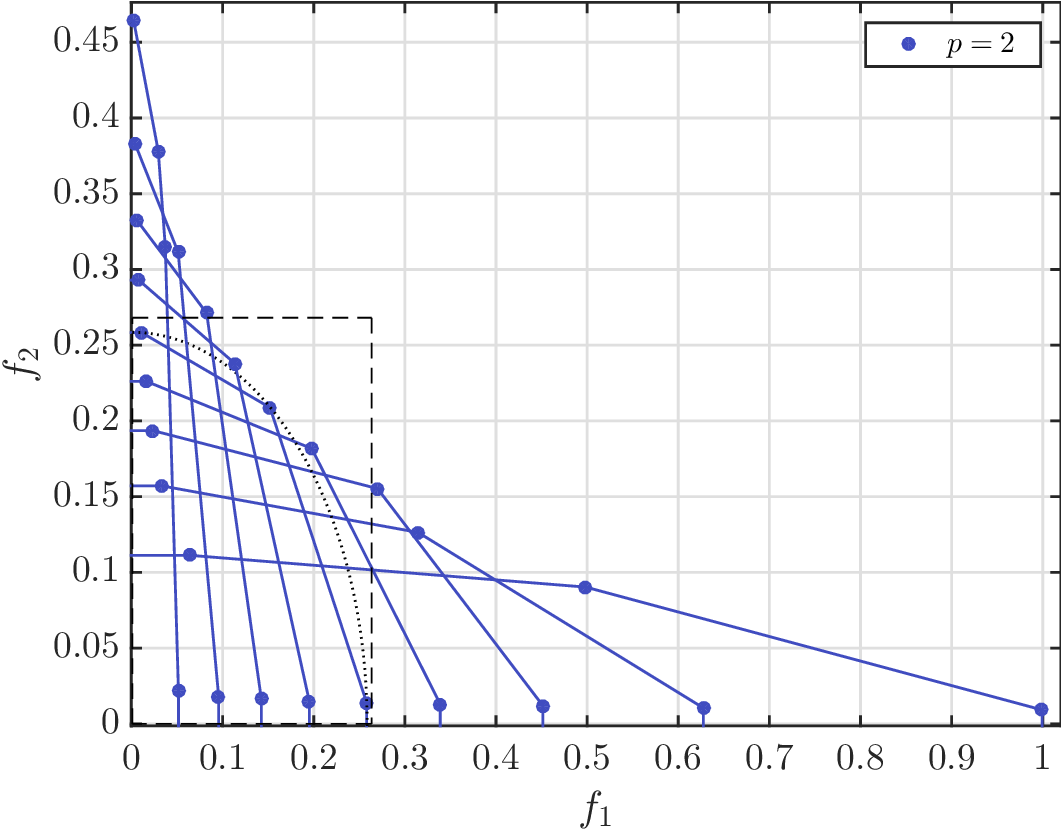} \vspace{5mm} \\
\includegraphics[width=6.5cm]{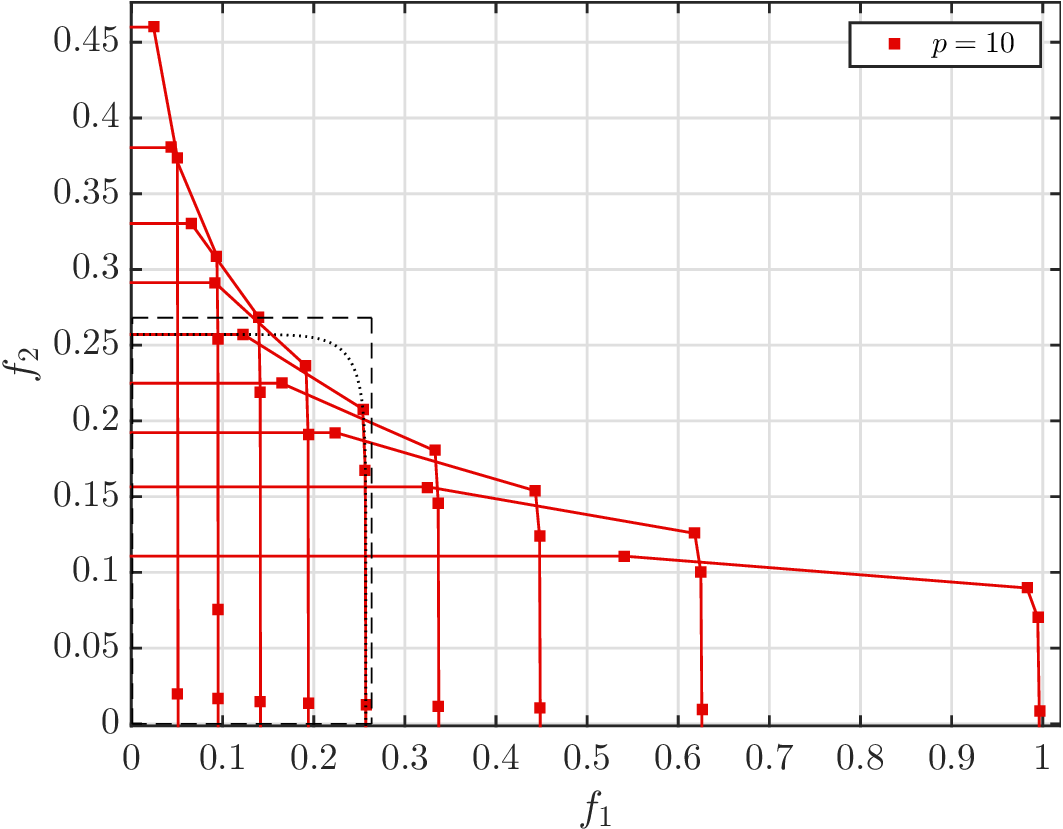}
\includegraphics[width=6.5cm]{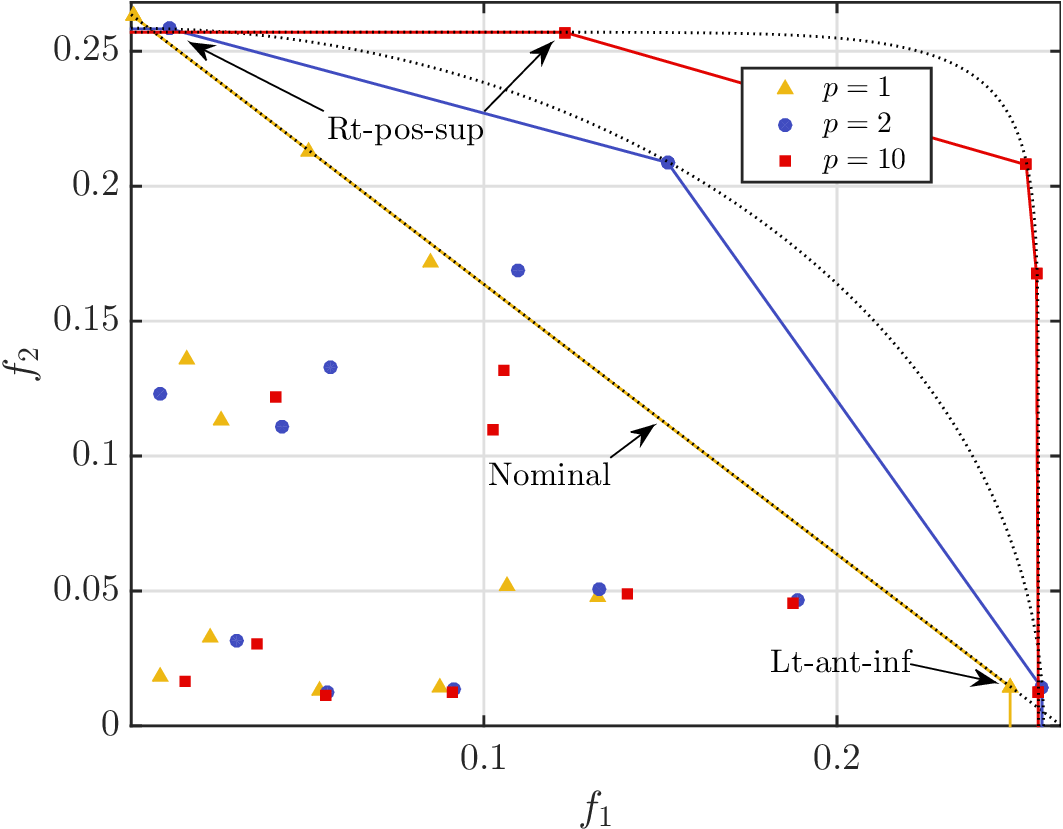}
\caption{Sets \hbox{$\conv(f(x;\S)) - (\Re^n_+ \setminus \{ 0 \})$} for the optimal $x$ to the test problem, for scalarization with \hbox{$p=1,2,10$} and $w$ varied in equidistant steps. The lower right subfigure shows the sets $f(x;\S)$ for the optimal $x$ to equiweighted scalarization, depicted over the region indicated by a dashed box in the other subfigures. Elements that are duplicates within a tolerance of $1 \%$ are here suppressed for legibility (the dose is almost invariant to shifts of the beams along their axis directions). The dotted lines are level curves of the scalarizing functions $u$. The function values are normalized to their maximum value over the solutions of all scalarizations.}
\label{fig-fvals}
\end{figure}

A natural question, given the fact that the choice of scalarizing function influences the shape of the optimal $f(x;\S)$, is to ask which scalarization is preferable. Our results---although not exhaustively characterizing the solutions that can be found by convex scalarizations---indicate that the linear scalarization (\hbox{$p=1$}) is advantageous compared to the scalarizations with positive curvature (\hbox{$p=2$} and \hbox{$p=10$}) because scenarios where both objectives take on poor objective values are avoided at only a minor sacrifice in the objectivewise worst case value of each objectives. 

The differences between \hbox{$p=1$} and \hbox{$p=10$} are further illustrated in Figure~\ref{fig-doses}, which depicts the dose distributions associated with the results in the lower right subfigure of Figure~\ref{fig-fvals}. Ideally, the $95 \%$ isodose region should cover the tumor contour in all scenarios. The left panel shows that the tumor coverage is satisfactory for both solutions in the scenario when no error occurs. Both solutions also underdose the tumor if the left anterioinferior shift occurs (the target contour falls outside the $95 \%$ isodose region in the middle panel). If the right posterosuperior shift occurs (right panel), then the solution for $p=1$ covers the tumor whereas the solution to \hbox{$p=10$} does not. This result is a consequence of that scalarization with $p=10$ gives little incentive to improve tumor coverage beyond the level of tumor coverage in the worst scenario, even in scenarios where improving tumor coverage is not in conflict with sparing of the rectum.

\begin{figure}[hbtp]
\centering
\begin{tabular}{cccc}
 & Nominal scenario & Left anteroinferior shift & Right posterosuperior shift \\
\cmidrule(lr){2-2} \cmidrule(lr){3-3} \cmidrule(lr){4-4}
\begin{sideways}\parbox{2.5cm}{\centering $p=1$}\end{sideways} 
& \includegraphics[width=4.3cm]{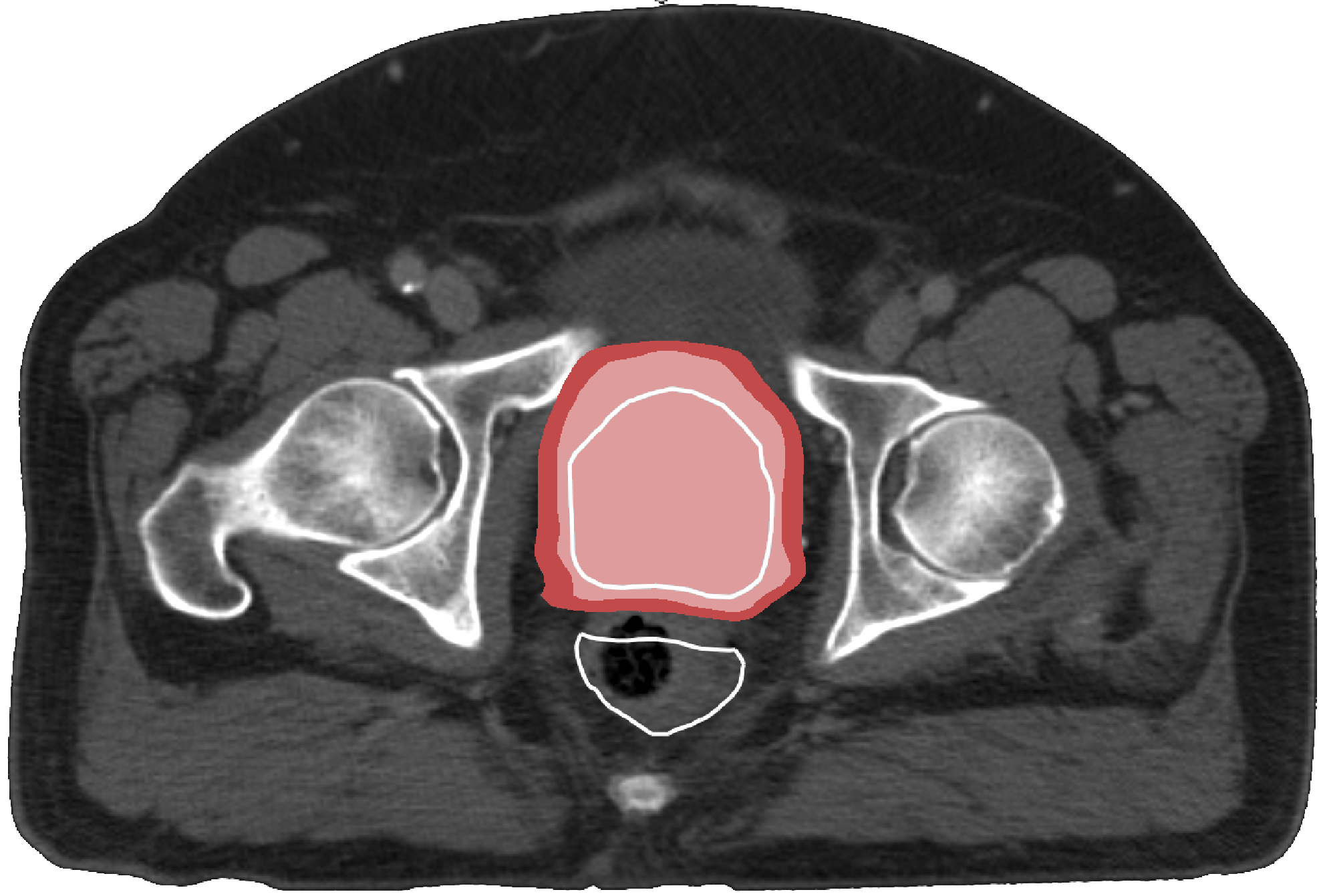} & \includegraphics[width=4.3cm]{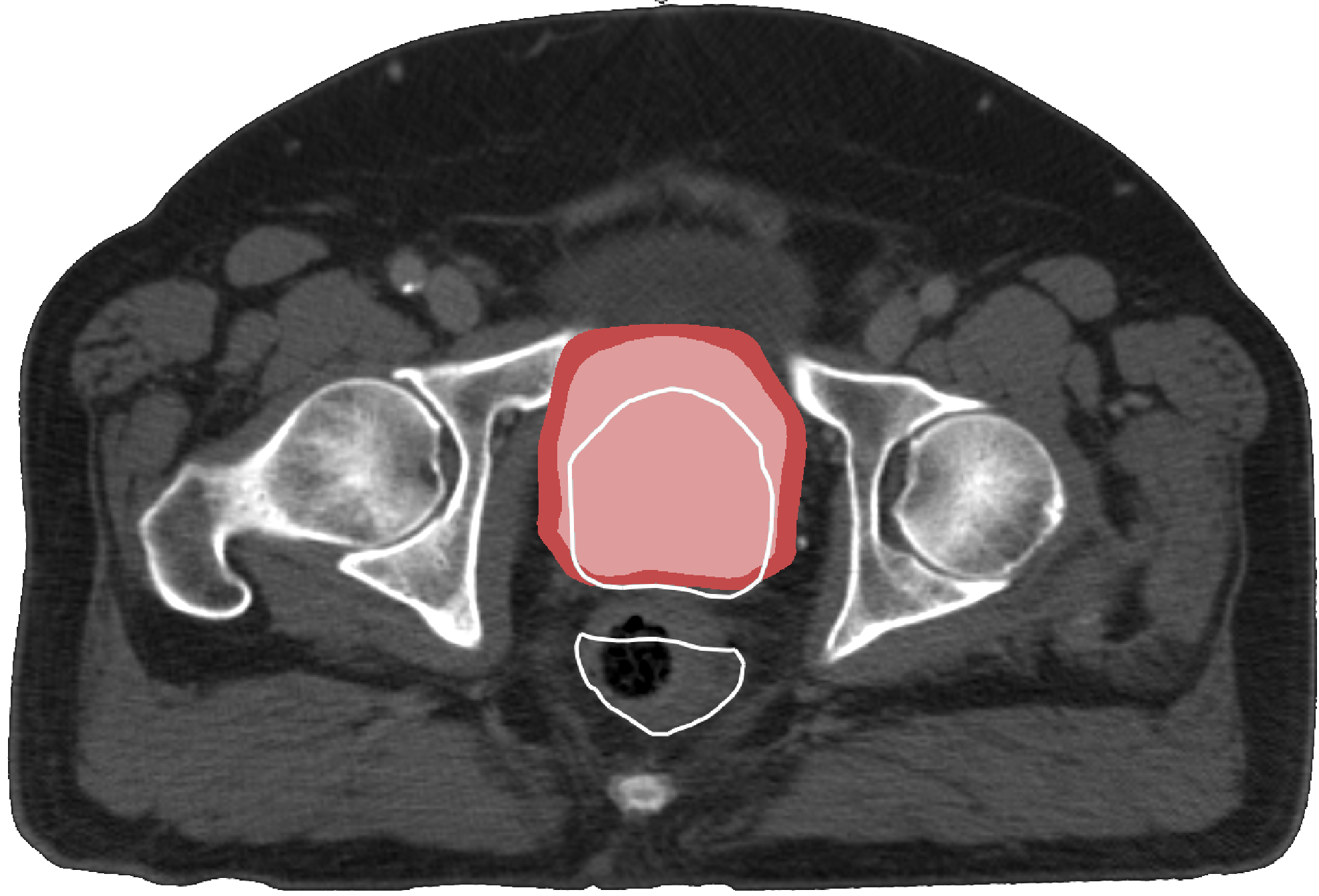} & \includegraphics[width=4.3cm]{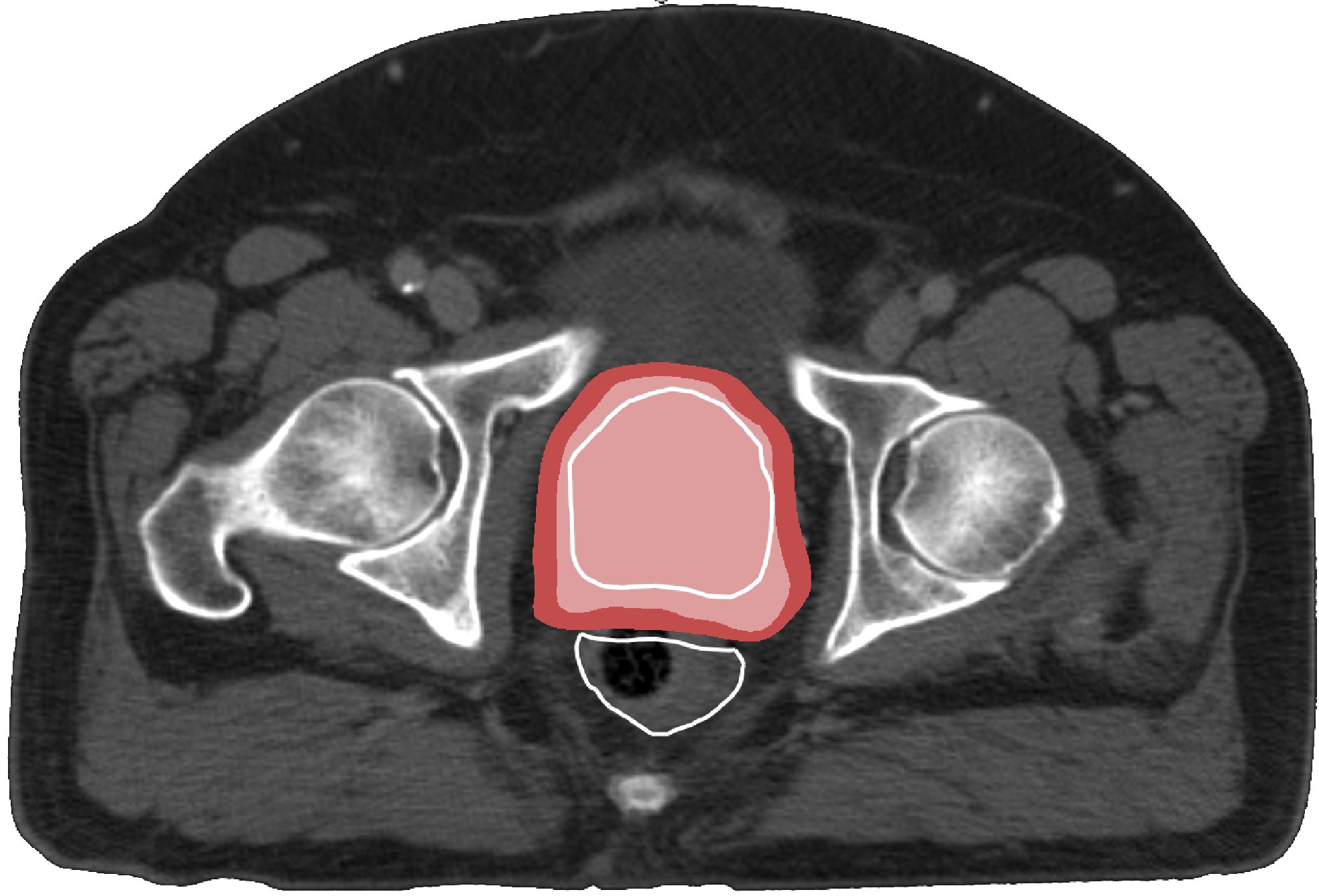} \\
\begin{sideways}\parbox{2.5cm}{\centering $p=10$}\end{sideways} 
& \includegraphics[width=4.3cm]{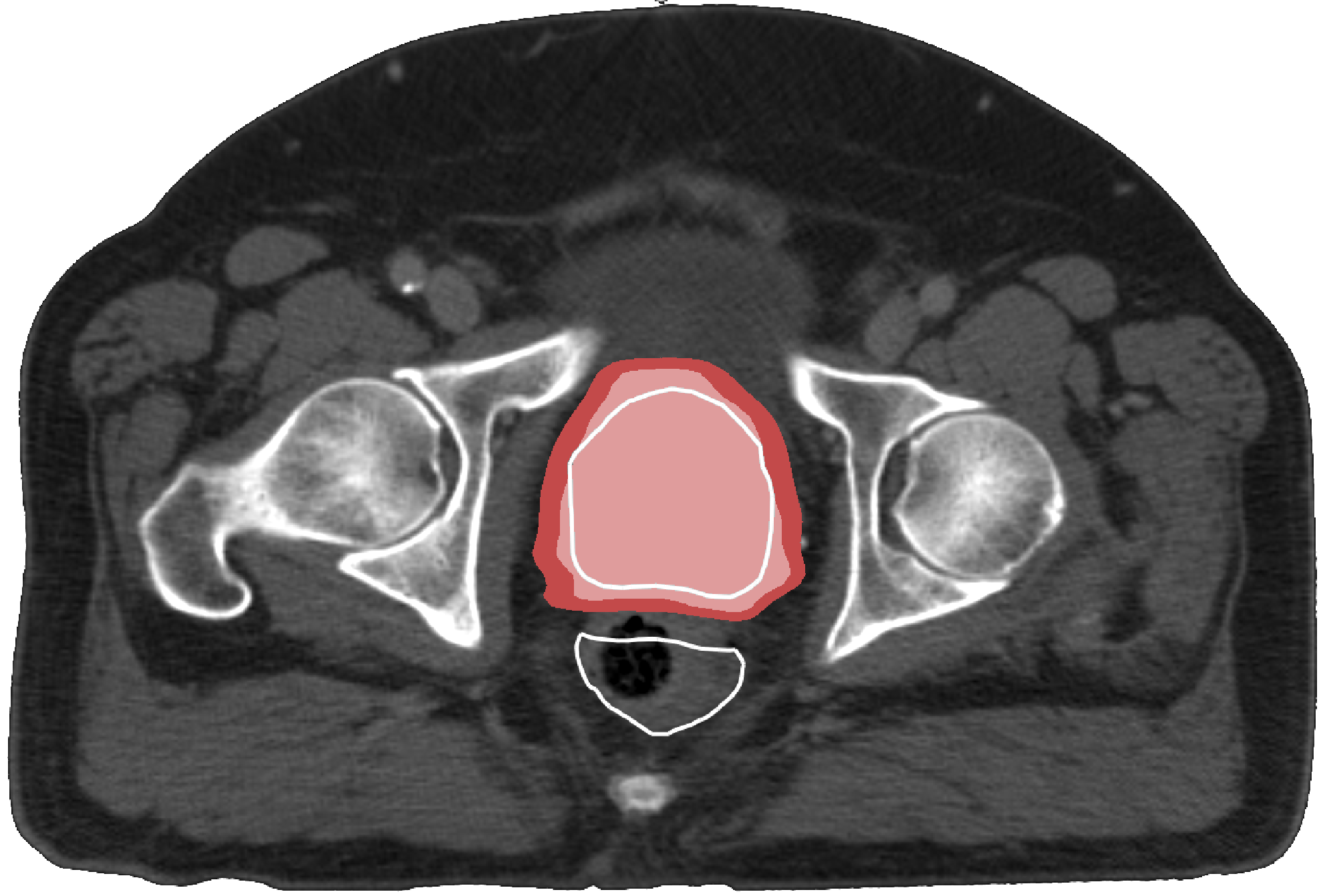} & \includegraphics[width=4.3cm]{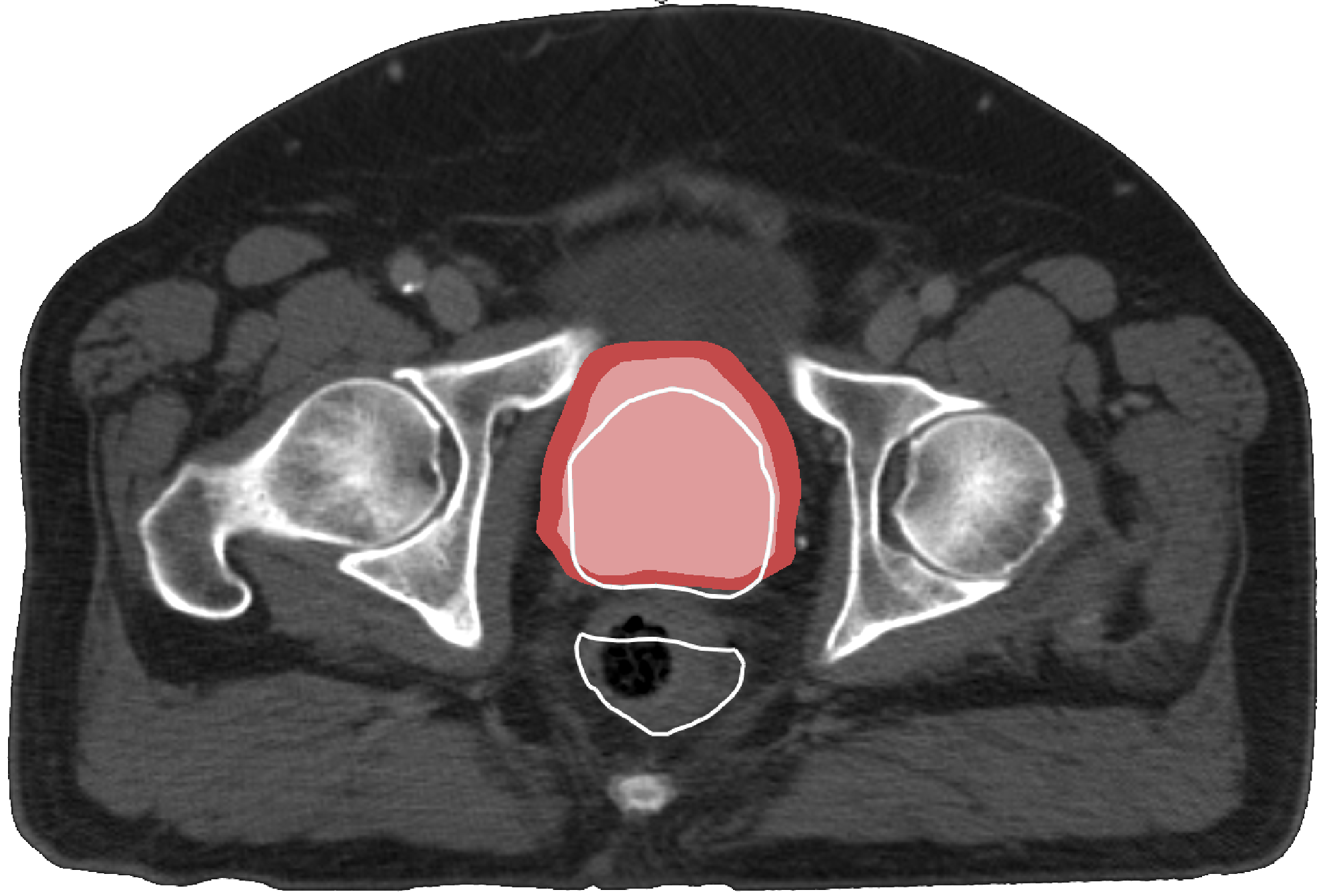} & \includegraphics[width=4.3cm]{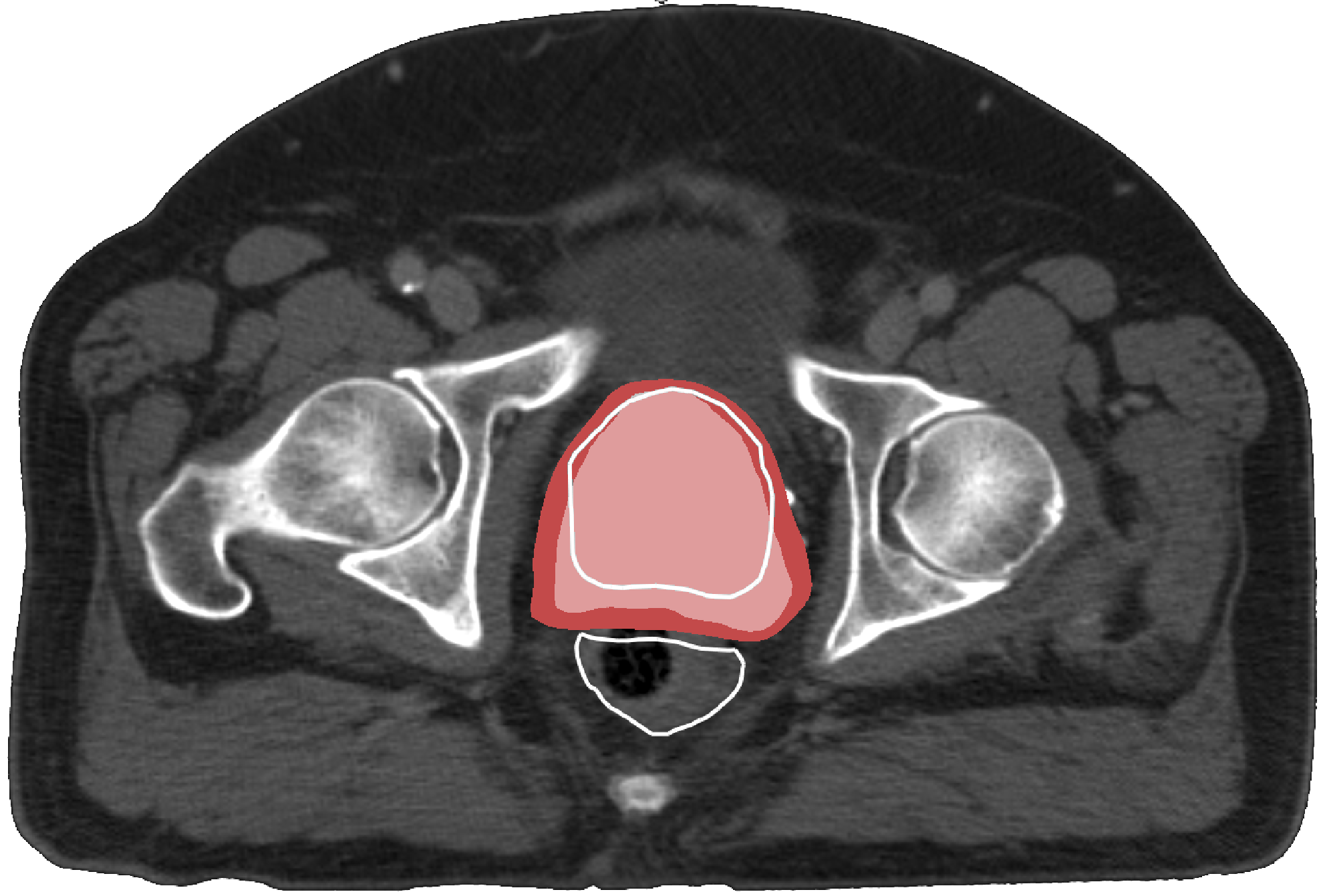} \\
\end{tabular}
\caption{Radiation dose associated with the solutions to \hbox{$p=1$} and \hbox{$p=10$} from the lower right subfigure of Figure~\ref{fig-fvals}, depicted in the nominal scenario and two shift scenarios. The white circular contour indicates the tumor volume, the triangular contour below indicates the rectum, the region in dark red indicates the volume that receives $75 \%$ of the prescribed dose or more, and region in light red indicates the volume that receives $95 \%$ of the prescribed dose or more. The depicted views are transversal cuts of a patient oriented head-first supine, meaning that the anterior direction is up in the picture, the posterior direction is down, the inferior direction is outward from the picture, and the superior direction is inward. The patient's left is right in the picture and the patient's right is left.}
\label{fig-doses}
\end{figure}

\section{Discussion}
Our numerical results show that in robust multiobjective optimization, it is important to carefully select a scalarization that is in line with one's preferences, something that is not necessary in deterministic multiobjective optimization. To clarify, a weighted $p$-norm scalarization (similar to~\eqref{u}) can in the deterministic case find the entire efficient set to convex problems regardless of the choice of $p$~\citep[page 81]{sawaragi85}. If multiple scenarios exist, then the scalarized problem finds a solution $x$ such that the shape of $f(x;\S)$ adapts to the level set of the scalarizing function, meaning that all robust efficient solutions in general cannot be found by a single choice of $p$. For our example, we argued that scalarization with the 1-norm is preferable to a $p$-norm that is closer to the maximum norm, because the 1-norm allows different objectives to take on poor values in different scenarios, and thereby finds solutions that can exploit correlation between the objectives. To exemplify, the solution to the 1-norm scalarized problem of our example exploited that a high dose on the anterior side of the tumor cannot become displaced into the rectum. This high dose region contributes to a robust tumor coverage without posing a risk for rectal toxicity. The solution for \hbox{$p=10$} did not exploit this fact, and thereby achieved a less robust tumor coverage.

A change from 1-norm to 10-norm scalarization for the test problem only resulted in a minor benefit in the worst case values of each objective considered independently. A natural question is whether it is possible to predict the magnitude of the benefit of a change from 1-norm to $\max$-norm scalarization. Given an optimal solution $x^*$ to the 1-norm problem, a trivial bound on the possible improvement in objective \hbox{$i = 1,\ldots,n$} is \hbox{$\max_{s \in \S}\{ f_i(x^*;s) \} - \min_{s \in \S} \{ \rho_i(f(x^*;s)) \}$}, where $\rho_i(y)$ is the $i$th component of $y$ projected along the $i$th dimension onto the 1-norm problem's optimal hyperplane bounding $f(x^*;\S)$ (the dotted line in Figure~\ref{fig-fvals}). An interesting question is whether better bounds can be found for problems with special structure.

The importance of the choice of scalarizing function holds when the uncertainty jointly affects the objectives. If the uncertainty instead materialized in an objectivewise manner, meaning that $\S$ is given by \hbox{$\S = \S_{\mathrm{obj}}^n$}, where $\S_{\mathrm{obj}}$ is the objective-specific uncertainty set, then the choice of scalarizing function is not crucial, just as in the deterministic case. This property stems from the fact that $f(x;\S)$ then forms a hyperrectangle in $\Re^n$, see Figure~\ref{fig-objective-wise}. To any increasing scalarizing function, there is no difference between the solution depicted in the figure and a solution consisting solely of the point $f(x;(s_2,s_3))$. The scalarized problem can therefore be posed as a deterministic multiobjective problem with the objectives \hbox{$\max_{s \in \S_{\textrm{obj}}} f_1(\cdot; s),\ldots, \max_{s \in \S_{\textrm{obj}}} f_n(\cdot; s)$}~\citep[Theorem 5.4]{ehrgott14}. In this case, the model of~\citet{kuroiwa12} is equally applicable as the formulation we consider, with the added advantage that it fits within the deterministic multiobjective framework.

\begin{figure}[hbtp]
\centering
\includegraphics[width=6.5cm]{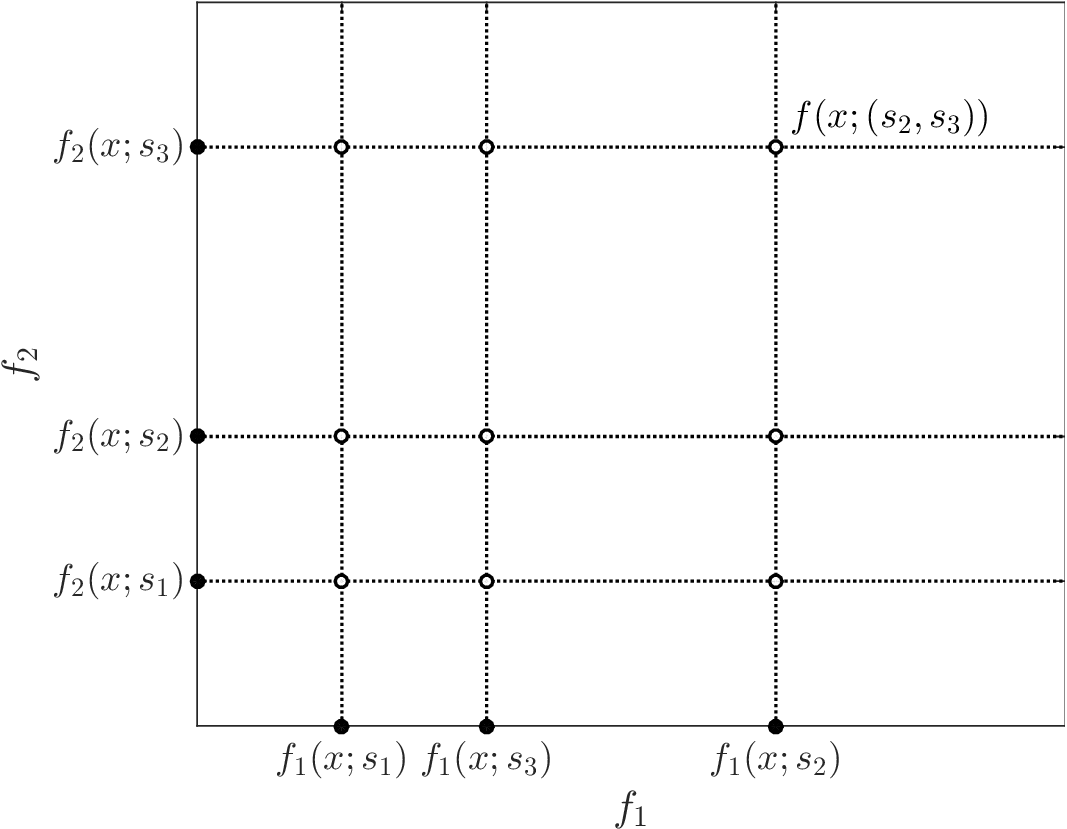}
\caption{Image $f(x;\S)$ (white circles) of a solution $x$ to an uncertain multiobjective problem with two objectives $f_1$ and $f_2$, each of which has uncertainty set \hbox{$\S_{\mathrm{obj}} = \{s_1, s_2, s_3\}$}, that are independently affected by the uncertainty, in the sense that the uncertainty set $\S$ for $f$ is given by \hbox{$\S = \S_{\mathrm{obj}}^2$} and \hbox{$f(x,(s;s')) = (f_1(x;s)~f_2(x;s'))^T$}.}
\label{fig-objective-wise}
\end{figure}

A scalarizing function's influence on the optimized solution can also be understood from the close connection between scalarization and a decision maker's utility function. If the decision maker's preferences follow a utility function to be maximized over $f(\X)$, then the negative of this function would be the ideal scalarizing function. Rational decision makers are generally believed to have strongly decreasing utilities~\citep{rosenthal85}. The scalarizing function should thereby be strongly increasing. By our sufficient conditions for robust efficiency, the robust efficient set thus contains all solutions that a rational decision maker would consider as optimal. If the decision maker is also assumed to have a risk-averse preference, meaning that the increase in utility brought about by a given decrease in objective value is smaller the smaller the objective value is, then the utility function is concave relative to the origin. The scalarizing function should thereby be convex. By our sufficient conditions for convex hull efficiency, the convex hull efficient set thus contains all solutions that a rational and risk-averse decision maker would consider as optimal.

\section{Conclusions}
In this paper, we have arrived at the following theoretical results:
\begin{itemize}
\item \textbf{Robust efficiency:} Each robust efficient solution is optimal to minimization of some strictly increasing scalarizing function (Theorem~\ref{thm-nec-robust-mco}), and any optimal solution to minimization of a strongly increasing scalarizing function is robust efficient (Theorem~\ref{thm-suff-robust-mco}). Not all robust efficient solutions can be found by minimization of a strictly increasing and convex function, even for convex problems (Example~\ref{ex-rob-is-not-optimal-to-convex}). 

\item \textbf{Convex hull efficiency:} The set of convex hull efficient solutions is a subset of the set of robust efficient solutions (Proposition~\ref{che-is-efficient}). In the general case, the subset is proper (Example~\ref{ex-efficient-is-not-che}). Each convex hull efficient solution is optimal to minimization of some strictly increasing and convex scalarizing function (Theorem~\ref{thm-nec-che-robust-mco}), and any optimal solution to minimization of a strongly increasing and convex scalarizing function is convex hull efficient (Theorem~\ref{thm-suff-che-robust-mco}). Not all convex hull efficient solutions can be found by minimization of strictly increasing and linear functions, even for convex problems (Example~\ref{ex-che-is-not-optimal-to-linear}). There is no more general subclass of the robust efficient solutions than the convex hull efficient solutions that allows the necessary conditions to be stated with respect to convex scalarizing functions (Proposition~\ref{prop-che-is-largest-class}).
\end{itemize}
These results lead to an ontology for Pareto efficiency according to Table~\ref{tab-summary}. 

\begin{table}[htbp]
\begin{minipage}{\textwidth}  
\caption{Necessary and sufficient conditions on the scalarizing function for Pareto efficiency.}
\scriptsize
\centering
\begin{tabular}{llll}
\toprule
\begin{tabular}{@{}l@{}} Efficiency concept \\ ~ \end{tabular} & \begin{tabular}{@{}l@{}} Necessary condition \\ ~ \end{tabular} & \begin{tabular}{@{}l@{}} Sufficient condition \\ ~ \end{tabular} & \begin{tabular}{@{}l@{}} Necessary condition \\ for convex problems \end{tabular} \\
\midrule
Deterministic & Strictly increasing, convex & Strongly increasing & Strictly increasing, linear \\
\begin{tabular}{@{}l@{}} Robust \\ ~ \end{tabular} & \begin{tabular}{@{}l@{}} Strictly increasing \\ ~ \end{tabular} &  \begin{tabular}{@{}l@{}} Strongly increasing \\ ~ \end{tabular} & \begin{tabular}{@{}l@{}} Counterexample to strictly \\ increasing, convex \end{tabular}\\
\begin{tabular}{@{}l@{}} Convex hull \\ ~ \end{tabular} & \begin{tabular}{@{}l@{}} Strictly increasing, convex \\ ~ \end{tabular} & \begin{tabular}{@{}l@{}} Strongly increasing, convex\footnote{That convexity of the scalarizing function cannot be dispensed with follows from Example~\ref{ex-efficient-is-not-che}.} \\ ~ \end{tabular} & \begin{tabular}{@{}l@{}} Counterexample to strictly\\ increasing, linear\end{tabular} \\
\bottomrule
\end{tabular}\label{tab-summary}
\end{minipage}
\end{table}

%\ACKNOWLEDGMENT{The constructive comments by Anders Forsgren, Bj{\"o}rn H{\aa}rdemark, and G{\"o}ran Sporre are highly appreciated. The second author's research was supported by the Swedish Research Council (VR).}

%\bibliographystyle{ormsv080} % outcomment this and next line in Case 1
%\bibliography{bibliography} % if more than one, comma separated

\end{document}